\documentclass[11pt,reqno]{article}   
\usepackage{fullpage}

\usepackage{microtype}
\usepackage[unicode=true]{hyperref}
\usepackage[british]{babel}
\usepackage{amsmath}
\usepackage{cite}
\usepackage{amsfonts}
\usepackage{amssymb}
\usepackage{amsthm}
\usepackage{cleveref}  
\usepackage{authblk}
\usepackage{mypack} 
\usepackage[pdftex]{color,graphicx}
\usepackage{adjustbox}
\usepackage{soul}
\usepackage{comment}
\usepackage{bbold}
\usepackage{tikz} 
\usetikzlibrary{decorations.pathreplacing,angles,quotes}
\usetikzlibrary{matrix,arrows,decorations.pathmorphing}
\usepackage{tikz-cd}
\usetikzlibrary{arrows}
\usetikzlibrary{decorations.markings}
\usepackage{braket} 

\usepackage{bbold}
\usepackage{diagbox}
\usepackage{caption}
\usepackage{subcaption}
\usepackage{pdfpages} 
\usepackage{blkarray}
\usepackage{centernot}
\usepackage{mathtools}
\usepackage{stmaryrd}
\usepackage{soul}  
\usepackage{enumitem}

\definecolor{bluegray}{rgb}{0.4, 0.6, 0.8}
\definecolor{turquoise}{rgb}{0.2, 0.7, 0.6}

\usepackage{todonotes}
\newcounter{commcount}

\newcommand\cpll[1]{{\color{orange}#1}}	

\usepackage{todonotes}
\usepackage{xcolor}

\title{Homology and carrier faces of simplicial distributions}
 
\author{Aziz Kharoof\footnote{aziz.kharoof@bilkent.edu.tr}}
\author{Chung-Ping Lai\footnote{cp.lai@bilkent.edu.tr}} 
\affil{{\small{Department of Mathematics, Bilkent University, Ankara, Turkey}}}

\date{\today}

\begin{document}

\maketitle
\begin{abstract}
We study the facial structure of simplicial distribution polytopes using topological and algebraic methods. In the theory of simplicial distributions, measurement and event spaces are represented by simplicial sets, and compatible probability assignments form a convex polytope. Given a simplicial distribution, it is known that its supports determine its carrier face. Moreover, the supports form a simplicial subset of the event space. This naturally raises the question of how the topology of this simplicial subset is related to the corresponding carrier face. Our main objective is to relate the dimension of the carrier face to the topology and combinatorial structure of the space of supports. For $n$-facet-paired measurement spaces, we first express the carrier dimension in terms of the nullity of an associated facet matrix. When the measurement space is additionally orientable, we show that the carrier dimension is determined by the top-dimensional homology of the space of supports, yielding, in particular, a homological criterion for detecting vertices. 
We then develop an algebraic reduction based on a strong
connectivity relation on the generating simplices of the space
of supports. This construction yields a reduced facet matrix,
indexed by the strongly connected components, whose null space
is isomorphic to that of the original facet matrix.
As a consequence, both the carrier dimension and extremality can be determined from the topology of the support and the way its strongly connected components are glued together.
\end{abstract}

\tableofcontents

\section{Introduction}

Probability distributions arising from measurement scenarios often form convex sets with rich combinatorial and geometric structure. In the finite case, distributions satisfying the relevant compatibility conditions form a polytope, and understanding its vertices and faces is a natural problem \cite{pironio2011extremal}. Such probability models arise, in particular, in the study of non-locality and contextuality in quantum foundations  \cite{bell64,KS67,brunner2014bell,budroni2022kochen}.

The framework of simplicial distributions, introduced in \cite{okay2022simplicial}, provides a topological approach to these probability models. In this framework, measurement scenarios and outcome spaces are represented by simplicial sets, allowing tools from simplicial and algebraic topology to be applied to the study of probability distributions. Simplicial sets are combinatorial models of spaces and form one of the basic objects of modern homotopy theory \cite{goerss2009simplicial}. This perspective has led to several developments in which topological and algebraic methods are used to study structural properties of simplicial distributions; see, for example, \cite{kharoof2023topological,kharoof2023homotopical,kharoof2026geometry,kharoof2026vertex}.

In this paper, we focus on the geometry of the polytope of simplicial distributions and investigate how its facial structure can be described using topological and combinatorial information. A complementary description of the vertices and faces of these polytopes is obtained by recording only which events have nonzero probability. Although this support information forgets the actual probability values, it determines the face structure of a polytope in standard form \cite{abramsky2016possibilities}. We study how the homology of the support of a simplicial distribution can be used to compute the dimension of the face determined by that support.

We work in the framework of simplicial distributions on a simplicial map
$$
f\colon E\to X,
$$
which describes a space $X$ of measurement contexts together with a space $E$
of events, where the fiber over a simplex $\sigma$ of $X$ consists of
the outcomes of the corresponding context. A simplicial distribution
on $f$ assigns a probability distribution $p_\sigma$ to each fiber,
compatibly with the face and degeneracy maps. In particular, the face
maps express compatibility under marginalization. We denote the set of
such distributions by $\sDist(f)$. When $X$ is finitely generated and
$f$ has finite fibers, this set is a polytope in standard form.

For $p\in\sDist(f)$, the \emph{carrier face} $\carr(p)$ is the smallest face containing $p$, or equivalently, the unique face whose relative interior contains $p$. In particular, $p$ is a vertex of $\sDist(f)$ if and only if $\dim(\carr(p))=0$.
More generally, the dimension of $\carr(p)$ measures the number of independent directions along which $p$ can vary while remaining in the same face. Thus, the carrier dimension controls both extremality and the freedom in convex decompositions.

The support data naturally assemble into a simplicial set. Given
$p\in\sDist(f)$, its \emph{space of supports} is the simplicial subset
$E(p)\subseteq E$ defined by
$$
E(p)_n
=
\set{e\in E_n:\; p_{f_n(e)}(e)\neq 0}.
$$
The description of carrier faces in terms of supports gives
$$
\carr(p)
=
\set{q\in\sDist(f):\; E(q)\subseteq E(p)}.
$$
Consequently, the carrier face can be studied by restricting $f$ to the space of supports, obtaining 
$$
f|_{E(p)}\colon E(p)\to X.
$$
In fact, it is proved in \cite{kharoof2026possibilistic} that
$f|_{E(p)}$ is a simplicial bundle scenario in the sense of
\cite{barbosa2023bundle}, and therefore possesses useful geometric
properties. Our aim is to determine the dimension of $\carr(p)$ from the
topology of $E(p)$ and from the way its simplices are glued together.

Our main setting is that of $n$-facet-paired measurement spaces $X$. Roughly speaking, these are  finite $n$-dimensional simplicial sets obtained by gluing $n$-simplices along their $(n-1)$-dimensional faces so that every such face is shared by exactly two $n$-simplices, and the $n$-simplices are connected through these shared faces. {Such simplicial sets resemble triangulated closed manifolds.}  This condition provides
the type of regular gluing structure needed to relate the geometry of the
polytope of simplicial distributions to the topology of their spaces of
supports.
Our main examples are:
\begin{itemize}
\item the boundary of a simplex of dimension at least two. This scenario plays a key role in connection with the classical theorem of Vorob'ev \cite{vorob1962consistent}, which characterizes when compatible families of probability distributions admit a global extension;
\item the usual measurement spaces of Bell scenarios \cite{brunner2014bell} with two measurements per observer and at least two observers.
\end{itemize}

In this setting, the topology of the space of supports completely determines the
dimension of the carrier face. More precisely, we obtain the following
homological characterization.

\begin{thm}[Homological characterization]\label{thm:intro-homology}
Let $n\geq 1$, let $X$ be an orientable $n$-facet-paired simplicial set, and let $f\colon E\to X$ be a simplicial map. Then, for every $p\in\sDist(f)$,
$$
\dim(\carr(p))
=
\rank H_n(E(p))-1,
$$
where homology is taken with integer coefficients.
\end{thm}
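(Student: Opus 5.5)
The plan is to reduce the carrier dimension to a linear-algebra count over the top-dimensional simplices of $E(p)$, and then identify that count with $H_n(E(p))$.

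\textbf{Step 1: parametrize the carrier face.} By the description $\carr(p)=\set{q\in\sDist(f):\; E(q)\subseteq E(p)}$, a point of $\carr(p)$ is a simplicial distribution on $f|_{E(p)}\colon E(p)\to X$. Since $X$ is $n$-dimensional and every simplex is a face or degeneracy of an $n$-simplex, such a $q$ is determined by its values $q_\sigma(e)$ on the non-degenerate $n$-simplices $e\in E(p)_n$. These values must satisfy three conditions:
\begin{itemize}
\item normalization on each fiber over an $n$-simplex $\sigma$ of $X$;
\item nonnegativity;
\item agreement of marginals on each $(n-1)$-face $\tau$ of $X$.
\end{itemize}
In the facet-paired setting each $\tau$ is a face of exactly two $n$-simplices $\sigma,\sigma'$. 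So for each $\tau$ and each $(n-1)$-simplex $e'\in E(p)$ over $\tau$, the agreement condition reads
$$\sum_{d_i e=e'} q(e)=\sum_{d_j e=e'} q(e),$$
where the left sum runs over $e$ over $\sigma$ and the right over $e$ over $\sigma'$. Because $p$ lies in the relative interior of $\carr(p)$, all coordinates $p(e)$ with $e\in E(p)_n$ are strictly positive. Hence $\dim\carr(p)$ equals the dimension of the affine space cut out by the linear equations alone, that is, $\dim \ker M - 1$. Here $M$ is the facet matrix of the homogeneous marginal equations, and the $-1$ accounts for normalization. Normalization reduces to a single condition because $X$ is strongly connected through shared facets: the marginal equations force a common total mass $\sum_{e\text{ over }\sigma} q(e)$ for all $\sigma$. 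This recovers the nullity formula for $n$-facet-paired spaces that the abstract announces, and I would invoke it if it is stated earlier.

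\textbf{Step 2: identify $\ker M$ with $n$-cycles.} Use the orientation of $X$ to choose signs $\epsilon_\sigma=\pm1$ on the $n$-simplices such that, for each shared facet $\tau=d_i\sigma=d_j\sigma'$, we have $(-1)^i\epsilon_\sigma=-(-1)^j\epsilon_{\sigma'}$. Send a vector $q\in\mathbb{R}^{E(p)_n}$ to the chain $c_q=\sum_e \epsilon_{f(e)}\,q(e)\,e$. The coefficient of $\partial c_q$ at an $(n-1)$-simplex $e'$ over $\tau$ is then
$$(-1)^i\epsilon_\sigma\Bigl(\sum_{d_ie=e'}q(e)-\sum_{d_je=e'}q(e)\Bigr).$$
Contributions from other face indices do not occur, because $\tau$ is hit only by these two facets. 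One also has to check that degenerate faces cause no problem: degenerate $(n-1)$-simplices are zero in normalized chains, and the marginal condition there follows from the others. Consequently $\ker M\cong Z_n(E(p);\mathbb{R})$.

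Since $E(p)$ has no $(n+1)$-dimensional non-degenerate simplices over the $n$-dimensional $X$, we get $Z_n=H_n$. Finally, because $H_n$ of a finite complex in top degree is free, $\dim H_n(E(p);\mathbb{R})=\rank H_n(E(p))$. Combining with Step 1 gives $\dim\carr(p)=\rank H_n(E(p))-1$.

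\textbf{Main obstacle.} The delicate part is Step 2's bookkeeping in the simplicial-set (not simplicial-complex) setting. An $n$-simplex $\sigma$ might have two of its own faces glued together, that is $\sigma=\sigma'$. Simplices of $E(p)$ may also have degenerate faces, or several faces mapping to the same $e'$. I would handle this by working with normalized chains and formulating the orientation condition face-by-face as above, so that the self-glued case is covered by the same sign identity. I would also double-check the normalization argument in Step 1, namely that connectivity of $X$ via shared facets indeed reduces the normalization constraints to one.
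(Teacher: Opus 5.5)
Your proposal is correct and follows essentially the paper's route. Step 1 is Proposition \ref{pro:carr=facetmat}: you argue via strict positivity of $p$ on $E(p)$, where the paper explicitly builds a nonnegative basis of $\Null(A)$. Step 2 is Proposition \ref{pro:rank-A(f)-rank-B(f)}, whose sign change $a_e=\epsilon_{f_n(e)}$ is exactly your map $q\mapsto c_q$, combined with $Z_n=H_n$ in top degree.

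The bookkeeping you flag as the main obstacle does not occur. Propositions \ref{pro:facet-paired-properties} and \ref{pro:propertiesofE} guarantee that the faces of a nondegenerate $n$-simplex of $X$ or of $E(p)$ are nondegenerate and pairwise distinct, so no self-gluing or repeated faces arise.
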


The theorem therefore provides a way to determine carrier dimensions,
and in particular to test extremality, without first enumerating the
vertices of the polytope. It identifies the remaining probabilistic
freedom within the carrier face with the top-dimensional homology of the
space of supports. In particular, under the hypotheses of the theorem,
$$
p \text{ is a vertex } \iff  
\rank H_n(E(p))=1.
$$

We also give an algebraic description that does not require
orientability. For a direction $\chi$ on the graph of generating
$n$-simplices of $X$ joined along $(n-1)$-faces, we define a facet matrix $A(f|_{E(p)},\chi)$
(Definition \ref{def:Afchi}) that
records the marginal compatibility equations on the support.
Some of these equations simply force two individual probabilities to be equal.
The strong connectivity relation, defined first in \cite{kharoof2026possibilistic}, groups the generating simplices of
$E(p)$ along precisely these elementary equalities.
Replacing coordinates corresponding to simplices in the same
strongly connected component by a single coordinate yields the
\emph{reduced facet matrix} $C(f|_{E(p)},\chi)$, which records how
distinct strongly connected components are glued together along
their $(n-1)$-faces.
\begin{thm}[Algebraic characterization]\label{thm:intro-algebraic}
Let $n\geq 1$, let $X$ be an $n$-facet-paired simplicial set,
and let $f\colon E\to X$ be a simplicial map.
For every $p\in\sDist(f)$ and every direction $\chi$ on $X$, we have
$$
\dim(\carr(p))
=\dim\bigl(\Null(A(f|_{E(p)},\chi))\bigr)-1
=\dim\bigl(\Null(C(f|_{E(p)},\chi))\bigr)-1.
$$
\end{thm}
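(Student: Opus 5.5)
We establish the two equalities separately: first identify $\carr(p)$ affinely with a polytope cut out by the facet matrix, then show that collapsing strongly connected components does not change the null space.

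\textbf{Step 1: reduce to the support.} By the support description $\carr(p)=\set{q\in\sDist(f): E(q)\subseteq E(p)}$, the carrier face is identified with $\sDist(f|_{E(p)})$. Moreover $p$ lies in the relative interior of this polytope, since it is strictly positive on every generating simplex of $E(p)$. So it suffices to prove that, for any $f$ admitting a distribution of full support, $\dim\sDist(f)=\dim\Null(A(f,\chi))-1$.

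\textbf{Step 2: coordinates.} Since $X$ is $n$-dimensional and generated by its $n$-simplices, a simplicial distribution is determined by the family $(p_\sigma(e))$, where $\sigma$ runs over the generating $n$-simplices and $e$ over the fibers. These numbers must satisfy three conditions.
\begin{itemize}
\item They are nonnegative.
\item Each $p_\sigma$ sums to $1$.
\item For each $(n-1)$-face $\tau$ shared by exactly two $n$-simplices $\sigma,\sigma'$, the marginals of $p_\sigma$ and $p_{\sigma'}$ at $\tau$ agree.
\end{itemize}
The direction $\chi$ orders each such pair, so the compatibility condition reads $d_i p_\sigma - d_j p_{\sigma'}=0$. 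These are exactly the rows of $A(f,\chi)$ as in Definition~\ref{def:Afchi}. Lower faces impose nothing new, because they are faces of some $(n-1)$-face.

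Let $V=\Null(A(f,\chi))$. The affine hull of $\sDist(f)$ is $V$ intersected with the normalization conditions. Because a full-support point exists, the positivity constraints are inactive at that point, so the affine hull is exactly this intersection.

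\textbf{Step 3: one normalization suffices.} On $V$, all the total masses $\sum_e v_\sigma(e)$ coincide. Indeed, for adjacent $\sigma,\sigma'$ the total mass equals the total mass of the common marginal at $\tau$. Facet-connectedness of $X$ then propagates this equality across all generating $n$-simplices. Hence the linear functional ``total mass of $\sigma_0$'' is the only independent normalization. It is nonzero on $V$, since $p\in V$ has total mass $1$. The affine hull is therefore a hyperplane in $V$, which gives the first equality.

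\textbf{Step 4: collapse strongly connected components.} For the second equality we use the definition of strong connectivity. Two generating simplices $e,e'$ of $E$ are elementarily linked when some row of $A$ has exactly two nonzero entries $\pm1$ at $e$ and $e'$. This happens when a face $\tau$ of the support has a unique preimage in each of the two adjacent fibers, so that row forces $v(e)=v(e')$.

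Every $v\in V$ is therefore constant on strongly connected components. Thus $V$ lies in the image of the injective map $\iota\colon\mathbb{R}^{\text{components}}\to\mathbb{R}^{\text{simplices}}$ that repeats each component's coordinate on its members.

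By definition the reduced matrix $C$ is obtained from $A\iota$ by deleting the rows that become zero; these are exactly the elementary equalities. Hence $\iota$ restricts to an isomorphism $\Null(C)\cong\Null(A)$:
\begin{itemize}
\item if $Cw=0$, then $A\iota w=0$, since the deleted rows vanish identically on $\operatorname{im}\iota$;
\item conversely, every $v\in\Null(A)$ has the form $\iota w$ with $Cw=0$.
\end{itemize}

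\textbf{Main obstacle.} The main obstacle is the bookkeeping in Step 4. We must check that the rows of $A\iota$ that vanish are exactly those discarded in defining $C$. In particular, a row whose two support simplices lie in the same component, but which is not itself elementary, must still be handled consistently. Whichever convention the definition of $C$ uses for such rows, it must not change the null space on $\operatorname{im}\iota$. I would verify this directly from Definition~\ref{def:Afchi} and the definition of $C$.
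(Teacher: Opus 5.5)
Your Steps~1--3 are correct and prove the first equality by a different route from Proposition~\ref{pro:carr=facetmat}. The paper gets the upper bound on $\dim(\carr(p))$ from $\carr(p)\subseteq\Null(A)$ and Lemma~\ref{lem:lin=affin}. For the lower bound it extends $p$ to a basis of $\Null(A)$, shifts each vector by a multiple of $p$ to make it nonnegative, and normalizes; this exhibits $r$ linearly independent distributions in $\carr(p)$. You instead compute $\Aff(\carr(p))$ directly. Because $p$ is strictly positive on every coordinate of $\sDist(f|_{E(p)})$, the affine hull is cut out by the equality constraints alone, and you identify it with the hyperplane $\{T_{\sigma_0}=1\}$ inside $\Null(A)$. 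Both proofs rest on the same fact, that all total masses agree on $\Null(A)$ (Equation~\eqref{eq:prenonsignaling}). Yours is shorter and uses the standard interior-point description of the affine hull. The paper's shifting trick is a hands-on version of the same use of $p$, and it produces explicit affinely independent points.

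The gap is in Step~4, exactly where you flag it, and it cannot be closed by inspecting the two definitions. Definition~\ref{def:thematrixX(p)} does not delete ``the rows that become zero'', and the deleted rows are not just the elementary ones. The matrix $C$ keeps precisely the faces $w$ having cofaces in two different strongly connected components, and it discards every $w$ all of whose cofaces lie in one component $K$. For such a $w$ the row of $A\iota$ is $(a_w-b_w)\,x_K$, where $a_w$ and $b_w$ count the cofaces of $w$ lying over the head and over the tail of $f_{n-1}(w)$. The definitions force neither $a_w=b_w$ nor that $w$ be elementary. If $a_w\neq b_w$, the discarded row would constrain $\Null(A)$ but not $\Null(C)$.

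The missing idea is to use $p$, which your Step~1 already supplies. Since $p\in\Null(A)$, the elementary rows make $p$ constant on $K$, equal to some $c$, and $c>0$ because $p$ has full support. The $w$-th entry of $Ap=0$ then reads $c(a_w-b_w)=0$. Hence every discarded row vanishes identically on $\operatorname{im}\iota$, and your two bullets give $\Null(C)\cong\Null(A)$.

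The paper's proof of Proposition~\ref{pro:rankA=rankC} handles this step by asserting that every $w$ outside $w_1,\dots,w_N$ satisfies $e_1\sim_w e_2$. That assertion fails in general. Take the connected double cover of $\partial\Delta^3$ branched at $[2]$ and $[3]$, glue the two lifts of $[0,1]$, and put the uniform distribution on it. There is a single strongly connected component, yet the glued edge is not elementary: it has two cofaces on each side. So the positivity argument above is the right way to finish.
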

If the support has $k$ strongly connected components, this yields
$$
p\text{ is a vertex}
\quad\Longleftrightarrow\quad
\rank \left(C(f|_{E(p)},\chi)\right)=k-1.
$$
Since each column of the reduced facet matrix can correspond to
multiple $n$-simplices and many $(n-1)$-simplices do not appear
as rows, $C(f|_{E(p)},\chi)$ can be substantially smaller than
the original facet matrix $A(f|_{E(p)},\chi)$, making it
particularly useful for computations.

The paper is organized as follows. Section~\ref{sec:polydist} recalls the necessary background on polytopes and simplicial sets and introduces simplicial distributions. Section~\ref{sec:spacesupp} studies the spaces of supports of simplicial distributions, introduces $n$-facet-paired simplicial sets and $n$-facet-paired bundle scenarios, and expresses the dimension of the carrier of a simplicial distribution in terms of the nullity of an associated facet matrix. Section~\ref{sec:Homolgcar} recalls the homology of simplicial sets and relates the carrier dimension to the top-dimensional homology of the space of supports. 
Finally, Section~\ref{sec:strongconnct} introduces the strong
connectivity relation and constructs the reduced facet matrix.

\paragraph{Acknowledgments.}
This work is supported by the Air Force Office of Scientific Research (AFOSR) under award number  FA9550-24-1-0257.


\section{The polytope of simplicial distributions} \label{sec:polydist}
In this section, we introduce simplicial distributions and study the polytope they form. We begin by recalling the basic notions concerning polytopes that will be used throughout the paper. We then review the relevant simplicial-set and distribution-monad constructions, introduce simplicial distributions, and develop a language for describing them in low dimensions.

\subsection{Polytopes in standard form}
We first recall some basic notions concerning polytopes. In particular, we characterize the faces of polytopes in standard form in terms of the support of their points.

\begin{defn}\label{def:polystan}
A \emph{polytope} in the Euclidean space $\RR^n$ is a bounded set of the form
$$
L=\{x\in\RR^n:\; Ax\leq b\},
$$
where $A$ is a real $m\times n$ matrix and $b\in\RR^m$. 

A polytope $L$ is called a \emph{polytope in standard form} if
$$
L=\{x\in\RR^n:\; Ax=b,\ x_i\geq 0\text{ for all }i\},
$$
where $A$ is a real $m\times n$ matrix, $b\in\RR^m$.
\end{defn}

\begin{defn}\label{def:face}
Let $L\subseteq\RR^n$ be a polytope. A \emph{face} of $L$ is a subset
$$
F=L\cap H,
$$
where $H=\{x\in\RR^n:\; c^\top x=\alpha\}$
is a hyperplane such that $c^\top x\leq\alpha$ for all $x\in L$.
\end{defn}
\begin{defn}\label{def:vertexxx}
An element $v$ of a polytope $L$ is called a \emph{vertex} if whenever $v = tb + (1-t)c$ for some $b, c \in L$ and $0 < t < 1$, then $b = c = v$. 
\end{defn}
\begin{defn}\label{def:affinindep}
A finite set of points $\{v_1, \dots, v_k\} \subset \mathbb{R}^n$ is said to be \emph{affinely independent} 
if the only real coefficients $\lambda_1, \dots, \lambda_k$ satisfying
    \[
    \sum_{i=1}^k \lambda_i v_i = 0
    \quad \text{and} \quad
    \sum_{i=1}^k \lambda_i = 0
    \]
    are $\lambda_1 = \cdots = \lambda_k = 0$. 
\end{defn}
\begin{defn}\label{def:dimF}
Let $L$ be a polytope and let $F$ be a face of $L$. If
$A\subseteq F$ is a maximal affinely independent subset of $F$, then the
\emph{dimension} of $F$ is defined by
$$
\dim(F) := |A| - 1.
$$
\end{defn}
The dimension above is well defined, since any two maximal affinely independent subsets of $F$
have the same cardinality.

\begin{ex}\label{ex:dimvertex=0}
If $F=\set{v}$, then $v$ is a vertex and $\dim(F)=0$.
\end{ex}

\begin{defn}\label{def:convaffin}
For a subset $X \subseteq \RR^n$,
\begin{itemize}
\item We write $\Conv(X)$ for the \emph{convex hull of $X$}:
$$
\Conv(X) = \left\{\sum_{i=1}^k \alpha_i x_i :\; k \in \NN, x_i \in X, \alpha_i \geq 0, \sum_{i=1}^k \alpha_i = 1\right\}.
$$
\item We write $\Aff(X)$ for the \emph{affine hull} of $X$:
$$
\Aff(X) = \left\{\sum_{i=1}^k \alpha_i x_i :\; k \in \NN, x_i \in X, \alpha_i \in \RR, \sum_{i=1}^k \alpha_i = 1\right\}.
$$
\end{itemize} 
\end{defn}

\begin{defn}\label{def:carrier}
Let $L$ be a polytope and let $F$ be a face of $L$. The
\emph{relative interior} of $F$, denoted by $\relint(F)$, consists of those
points $x\in F$ for which there exists $\varepsilon>0$ such that $y\in F$
whenever $y\in\Aff(F)$ and $d(x,y)<\varepsilon$.

For a point $x\in L$, the \emph{carrier face} of $x$, denoted by
$\carr(x)$, is the unique face $F$ of $L$ such that $x\in\relint(F)$.
\end{defn}

\begin{defn}\label{def:GenRelation}
Let $L\subseteq\RR^n$ be a polytope in standard form. For $x,y\in L$, we write $y\preceq x$
if, for every $1\leq i\leq n$,
$$
y_i\neq 0 \quad\Longrightarrow\quad x_i\neq 0.
$$
\end{defn}
This relation $\preceq$ is a preorder on the polytope $L$.

\begin{pro}\label{pro:yleqxVsuppcarr}
(\!\!\cite[Proposition 5.1]{abramsky2016possibilities}) 
Given a polytope in standard form $L$ and $x \in L$, we have 
$$
\carr(x)=\set{y\in L :\;  {y} \preceq {x} }.
$$
\end{pro}

\subsection{Simplicial distributions}

{We begin by recalling the basic notions concerning simplicial sets and the
distribution monad that will be used throughout this section.}

Let $\Delta$ denote the \emph{simplex category}, whose objects are the finite ordered sets
$$
[n]=\{0,1,\dots,n\},
\qquad n\geq 0,
$$
and whose morphisms are order-preserving maps. A \emph{simplicial set} is a contravariant functor
$$
X\colon \Delta^{\op}\to \catSet,
$$
where $\catSet$ denotes the category of sets. Equivalently, a simplicial set $X$ consists of a family of sets $(X_n)_{n\ge 0}$ together with face maps 
$d_i\colon X_n \to X_{n-1}$ for $0\le i \le n$ and degeneracy maps 
$s_i \colon X_n \to X_{n+1}$ for $0\le i \le n$ that satisfy the standard simplicial identities (see \cite{friedman2008elementary}).

An $n$-simplex $x\in X_n$ is called \emph{degenerate} if it lies in the
image of a degeneracy map; otherwise, it is called \emph{nondegenerate}.
We denote the set of nondegenerate simplices in $X_n$ by $X_n^\circ$.
A nondegenerate simplex is called a \emph{generating simplex} if it is not a
face of any other nondegenerate simplex.

An important example is the \emph{standard $n$-simplex} $\Delta^n$, defined as the simplicial set represented by $[n]$, i.e.
$$
\Delta^n(-)=\Hom_{\Delta}(-,[n]).
$$
The $k$-simplices of $\Delta^n$ are
$$
(\Delta^n)_k=
\left\{
[i_0,\ldots,i_k]
:\;
0\le i_0\le\cdots\le i_k\le n
\right\},
$$
where the face and degeneracy maps are given by deleting and repeating entries, respectively.
The simplex $[0,1,\ldots,n]$ is the unique generating simplex of $\Delta^n$.

A \emph{simplicial map} $f \colon X \to Y$ may be viewed either as a natural transformation between the corresponding functors or, equivalently, as a family of functions 
$f_n \colon X_n \to Y_n$ commuting with all face and degeneracy maps. For a simplex $\sigma \in X_n$ we usually write $f_{\sigma}$ instead of 
$f_n(\sigma)$ for {its image in $Y_n$.} We denote the category of simplicial sets by $\catsSet$.

A simplicial set $X$ is called \emph{$n$-dimensional} if for every $k>n$, every simplex in $X_k$ is degenerate.
For example, a $1$-dimensional simplicial set consists only of vertices and edges
(up to degeneracies), and can be viewed as a directed multigraph.

We also use the distribution monad $D$ (see \cite{jacobs2011probabilities}). For a set $X$, the set $D(X)$ consists of all finitely supported probability distributions on $X$.
$$
D(X)=\set{P\colon X \to \RR_{\geq 0}: \; \sum_{x\in X} P(x)=1 \text{ and } |\set{x:\; P(x)\neq 0}|< \infty}
$$
A function $g \colon X_1 \to X_2$ induces a pushforward map 
$D(g)\colon  D(X_1) \to D(X_2)$ given by marginalization. That means, for every $P \in D(X_1)$ and $x\in X_2$, we have:
$$
D(g)(P)(x)=\sum_{y:\;g(y)=x} P(y).
$$
This construction extends level-wise to simplicial sets, yielding a monad 
$D\colon \catsSet \to \catsSet$ (see \cite[Proposition 2.6]{kharoof2022simplicial}). So we have a unit 
$\delta_X\colon  X \to D(X)$ for every simplicial set $X$, assigning to each simplex the corresponding Dirac distribution.

\begin{defn}
Let $f\colon E\to X$ be a simplicial map. 
A \emph{simplicial distribution} on $f$ is a simplicial map $p \colon X \to D(E)$ making the following diagram commute:
$$
\begin{tikzcd}[column sep=huge,row sep=large]
 & D(E) \arrow[d,"D(f)"] \\
X\arrow[ru,"p"] \arrow[r,"\delta_X"'] & D(X).
\end{tikzcd}
$$
Equivalently, a simplicial distribution is a family of distributions
$$
\{p_{\sigma} \in D(f^{-1}(\sigma)):\; \sigma\in X_n\}_{n\geq 0}
$$
satisfying the compatibility conditions induced by the simplicial structure of $X$. 

We denote the set of simplicial distributions on $f$ by $\sDist(f)$. 
The simplicial set $X$ is usually called the \emph{measurement space}.
\end{defn}
\begin{pro}\label{pro:sdistfinstanform}
Let $f\colon E \to X$ be a simplicial map, where $X$ is finitely generated and, for every simplex $x\in X_n$, the fiber $f_n^{-1}(x)$ is finite. Then the set of simplicial distributions $\sDist(f)$ is a polytope in standard form.
\end{pro}
\begin{proof}
The proof is similar to that of \cite[Proposition 3.6]{kharoof2026vertex}.
\end{proof}

Now we {provide} some examples of measurement spaces, scenarios, and simplicial distributions. 

\begin{defn}\label{def:BXm}
Let $X$ be a simplicial set and let
$
\vec{m}=(m_x)_{x\in X_0}
$
be a tuple of natural numbers with $m_x\geq 2$. We define a simplicial set
$E(X,\vec m)$ as follows.

For $\sigma\in X_j$, denote its vertices by
$$
v_i(\sigma)
=
d_0^{\,i}d_{i+1}^{\,j-i}(\sigma)\in X_0,
\qquad
0\leq i\leq j.
$$
Then
$$
E(X,\vec m)_j 
=
\bigsqcup_{\sigma\in X_j}
\{\sigma\}\times
\ZZ_{m_{v_0(\sigma)}}\times\cdots\times
\ZZ_{m_{v_j(\sigma)}}.
$$

The structure maps are given by
$$
\begin{aligned}
d_i(\sigma,(a_0,\dots,a_j))
&=
(d_i(\sigma),(a_0,\dots,\widehat{a_i},\dots,a_j)),\\
s_i(\sigma,(a_0,\dots,a_j))
&=
(s_i(\sigma),(a_0,\dots,a_i,a_i,\dots,a_j)).
\end{aligned}
$$

The evident projection map induces a simplicial map
$$
f_{X,\vec{m}}\colon E(X,\vec{m})\to X.
$$
In the case that $m_x=m$ for every $x\in X_0$, we write $f_{X,m}\colon E(X,m)\to X$.
\end{defn}

\begin{ex}\label{ex:simpdisanydim}
For a $2$-dimensional simplicial set $X$, a simplicial distribution on $f_{X,\vec{m}}$ is described as follows:
\begin{itemize}
    \item To each $0$-simplex $x \in X_0$, we assign a distribution $p_x$ on $\ZZ_{m_x}$, which can be written as a vector
    $$
    \begin{pmatrix}
    p^0_x \\
    \vdots\\
    p^{m_x-1}_x
    \end{pmatrix}
    $$
    \item To each $1$-simplex $\sigma \in X_1$, we assign a distribution $p_\sigma$ on  $\ZZ_{m_{d_1(\sigma)}}\times
\ZZ_{m_{d_0(\sigma)}}$, which can be written as a matrix 
    $$
     \begin{pmatrix}
    p_{\sigma}^{0,0} \;\dots \dots \; p_{\sigma}^{0,m_{d_0(\sigma)}-1}\\
    \vdots \;\;\;\;\vdots \;\;\;\; \vdots \\
    p_{\sigma}^{m_{d_1(\sigma)}-1,0} \; \dots \dots \; p_{\sigma}^{m_{d_1(\sigma)}-1,m_{d_0(\sigma)}-1}
    \end{pmatrix}
    $$
Restricting to the endpoints of $\sigma$ (equivalently, taking marginals) yields
$$
    p_{d_1(\sigma)}=\begin{pmatrix}
    p_{\sigma}^{0,0} +\dots+ p_{\sigma}^{0,m_{d_0(\sigma)}-1}\\
    \vdots  \\
    p_{\sigma}^{m_{d_1(\sigma)}-1,0} + \dots + p_{\sigma}^{m_{d_1(\sigma)}-1,m_{d_0(\sigma)}-1}
    \end{pmatrix}\;,\;
    p_{d_0(\sigma)}= \begin{pmatrix}
    p_{\sigma}^{0,0} +\dots + p_{\sigma}^{m_{d_1(\sigma)}-1,0}\\
    \vdots \\
    p_{\sigma}^{0,m_{d_0(\sigma)}-1} + \dots + p_{\sigma}^{m_{d_1(\sigma)}-1,m_{d_0(\sigma)}-1}
    \end{pmatrix}
    $$
\end{itemize}
For each $2$-simplex $\sigma\in X_2$ we have a
distribution
$$
p_\sigma
\text{ on }
\ZZ_{m_{v_0(\sigma)}}\times
\ZZ_{m_{v_1(\sigma)}}\times
\ZZ_{m_{v_2(\sigma)}} .
$$
We may represent this distribution as $m_{v_2(\sigma)}$ matrices indexed by
$k=0,\dots,m_{v_2(\sigma)}-1$, where the $k$-th matrix is
$$
\begin{pmatrix}
p_\sigma^{0,0,k} & \dots & p_\sigma^{0,m_{v_1(\sigma)}-1,k}\\
\vdots & & \vdots\\
p_\sigma^{m_{v_0(\sigma)}-1,0,k} & \dots &
p_\sigma^{m_{v_0(\sigma)}-1,m_{v_1(\sigma)}-1,k}
\end{pmatrix}.
$$
The face maps correspond to taking marginals:
\begin{itemize}
\item $p_{d_2(\sigma)}^{i,j}=\sum_k p_\sigma^{i,j,k}$,
\item $p_{d_1(\sigma)}^{i,k}=\sum_j p_\sigma^{i,j,k}$,
\item $p_{d_0(\sigma)}^{j,k}=\sum_i p_\sigma^{i,j,k}$.
\end{itemize}
\end{ex}

\begin{defn}\label{def:sB(k,vecr)}
Let $k$ be a natural number and let $r_1,\dots,r_k$ be natural numbers. 
Let $A_1,\dots,A_k$ be pairwise disjoint sets such that 
$|A_i|=r_i$ for every $1 \leq i \leq k$. 
We define $sB(k,\vec r)$ to be the simplicial set whose set of $n$-simplices is
$$
sB(k,\vec{r})_n=\bigsqcup_{1\leq i_0\leq \dots \leq i_{n} \leq k} A_{i_0}\times \dots \times A_{i_{n}}.
$$
The face and degeneracy maps are given by deleting and repeating coordinates,
respectively. More precisely, 
$$
d_i(x_0,\dots,x_n) = (x_0,\dots,\widehat{x_i},\dots,x_n),
$$
and
$$
s_i(x_0,\dots,x_n) = (x_0,\dots,x_i,x_i,\dots,x_n).
$$
If $r_i=r$ for every $1\leq i \leq k$, then we will write $sB(k,r)$.
\end{defn}
{As a measurement space, $sB(k,\vec r)$}
represents a Bell
scenario with $k$ observers, where observer $i$ has $r_i$ measurements.

Note that $sB(k,\vec r)$ is a $(k-1)$-dimensional simplicial set, generated
by the simplices in
$$
A_1\times A_2\times\cdots\times A_k.
$$

\begin{ex}\label{ex:CHSHscenario}
Geometrically, the measurement space $sB(2,2)$ is a square. The simplicial map $f_{sB(2,2),2}$ represents 
the well-known CHSH scenario \cite{chsh69}.
\end{ex}
%


%

%

\begin{ex}\label{ex:233bellvertex}
The $(2,3,3)$ Bell scenario, i.e., two observers with three measurements per
observer and three outcomes per measurement, can be represented by the
simplicial map $f_{sB(2,3),3}$. 
The measurement space $sB(2,3)$ is a $1$-dimensional simplicial set whose generating simplices
are the elements of 
$$
A_1\times A_2,
$$
where 
$A_1=\{x_1,x_2,x_3\}$ and $A_2=\{y_1,y_2,y_3\}$.

Here is an example, introduced in \cite{jones2005interconversion} and discussed in \cite{kharoof2024extremal}, of a simplicial distribution $p$ on
$f_{sB(2,3),3}$ that is a vertex:

\noindent The distributions $p_{x_1,y_1}$, $p_{x_1,y_3}$, $p_{x_2,y_2}$, and $p_{x_3,y_2}$ are given by
    $$
    \begin{pmatrix}
    1/2 & 0 & 0 \\
    0 & 1/4 & 0\\
    0 & 0 & 1/4
    \end{pmatrix}.
    $$
The distribution $p_{x_1,y_2}$ is given by
    $$
    \begin{pmatrix}
    1/4 & 1/4 & 0 \\
    1/4 & 0 & 0 \\
    0 & 0 & 1/4
    \end{pmatrix}.
    $$
The distributions $p_{x_2,y_1}$ and $p_{x_3,y_1}$ are given by
    $$
    \begin{pmatrix}
    1/4 & 1/4 & 0 \\
    0 & 0 & 1/4\\
    1/4 & 0 & 0
    \end{pmatrix}.
    $$
The distribution $p_{x_2,y_3}$ is given by
$$
\begin{pmatrix}
0 & 1/4 & 1/4\\
1/4 & 0 & 0\\
1/4 & 0 & 0
\end{pmatrix}.
$$
The distribution $p_{x_3,y_3}$ is given by
$$
\begin{pmatrix}
1/4 & 0 & 1/4\\
0 & 1/4 & 0\\
1/4 & 0 & 0
\end{pmatrix}.
$$
\end{ex}

%
%
%
%

%

%
\section{The space of supports of a simplicial distribution}\label{sec:spacesupp}
In this section, we introduce the notion of an $n$-facet-paired simplicial set, which will serve as the measurement space in our main results. Two important examples of such measurement spaces are the Bell scenarios with two measurements per observer and the boundaries of simplices. We then associate a matrix to the space of supports of a simplicial distribution and derive a formula for the dimension of its carrier in terms of the nullity of this matrix.

\subsection{The geometrical properties of the space of supports}

We first define the space of supports and show that it forms a simplicial set. We then recall the notion of a bundle scenario and show that the restriction of a simplicial map to the space of supports is a bundle scenario. We also introduce the notions of an $n$-facet-paired simplicial set and an $n$-facet-paired bundle scenario. Finally, we establish some useful properties of the space of supports when the measurement space is $n$-facet-paired.

\begin{defn}\label{def:eventspaceofp}
Given a simplicial distribution $p$ on a simplicial map 
$f \colon E \to X$, we define the \emph{space of supports of $p$} to be the simplicial
subset $E(p) \subseteq E$ given in degree $n$ by
$$
E(p)_n=\set{e \in E_n:\; p_n(f_n(e))(e)\neq 0}
$$
\end{defn} 

The simplicial set $E(p)$ is well defined. To see this, let
$\theta\colon[k]\to[n]$ be a morphism in $\Delta$, let $\theta^\ast$
denote the corresponding simplicial structure map, and let
$e\in E(p)_n$. Since $f$ and $p$ are simplicial maps, we have
$$
\begin{aligned}
p_k\bigl(f_k(\theta^\ast(e))\bigr)(\theta^\ast(e))
&=
D(\theta^\ast)\bigl(p_n(f_n(e))\bigr)(\theta^\ast(e))\\
&=
\sum_{e':\;\theta^\ast(e')=\theta^\ast(e)}
p_n(f_n(e))(e').
\end{aligned}
$$
Since all terms in the sum are nonnegative and the term corresponding to
$e'=e$ is nonzero, we have
$$
p_k\bigl(f_k(\theta^\ast(e))\bigr)(\theta^\ast(e))
\geq
p_n(f_n(e))(e)>0.
$$
Hence $\theta^*(e)\in E(p)_k$ as desired.
\begin{defn}\label{def:RLPofBundle}
Given simplicial maps $f\colon E\to X$ and $g\colon Y\to Z$, we say that
$f$ has the \emph{right-lifting property with respect to $g$} if a 
diagonal map making the diagram commute exists in every commutative square of the form
$$
\begin{tikzcd}[column sep=huge,row sep=large]
Y \arrow[d,"g"'] \arrow[r] & E\arrow[d,"f"] \\
Z \arrow[ru,dashed] \arrow[r] & X.
\end{tikzcd}
$$

A simplicial map $f\colon E\to X$ is called a \emph{bundle scenario} if,
for every morphism $\theta\colon[k]\to[n]$ in $\Delta$, it has the
right-lifting property with respect to the induced map $\theta_\ast\colon\Delta^k\to\Delta^n$.

Equivalently, $f$ satisfies the following lifting conditions:
\begin{enumerate}
\item \emph{Surjective:} $f$ has the right-lifting property with respect to
all maps $\emptyset\to\Delta^n$.

\item \emph{Locally surjective:} $f$ has the right-lifting property with
respect to all coface maps $d^i\colon\Delta^{n-1}\hookrightarrow\Delta^n$.

\item \emph{Discrete over vertices:} $f$ has the right-lifting property
with respect to all codegeneracy maps $s^i\colon\Delta^n\to\Delta^{n-1}$.
\end{enumerate}

These properties were first introduced in
\cite[Definition 4.1]{barbosa2023bundle}.
\end{defn}
\begin{pro}\label{pro:f|Ebundle}
Given a simplicial distribution $p\in\sDist(f)$, the simplicial map
$$
f|_{E(p)}\colon E(p)\to X
$$
is a bundle scenario with finite fibers.
\end{pro}

\begin{proof}
See \cite[Proposition 3.7]{kharoof2026possibilistic}.
\end{proof}

{We now define the $n$-facet-paired measurement spaces used in our main results. Our definition shares some features with Voevodsky's notion of a simplicial pseudomanifold~\cite[\S 3.0]{Voevodsky1990Flags}, while imposing additional restrictions specific to our setting.
\begin{defn}\label{def:facet-paired}
Let $n\geq 1$. A simplicial set $X$ is called \emph{$n$-facet-paired} if 
$$
X \cong {\colim}_{i\in \mathcal I} F(i), 
$$
where $F\colon \mathcal I \to \catsSet$ is a functor satisfying the following conditions:

\begin{enumerate}
    \item The indexing category $\mathcal I$ is finite and connected. In addition, for every pair of objects $i,j\in \mathcal I$,
    $$
    \Hom_{\mathcal I}(i,i)=\{\Id_i\},
    \qquad
    |\Hom_{\mathcal I}(i,j)|\in\{0,1\}
    \quad \text{for } i\neq j.
    $$

    \item For every object $i\in \mathcal I$, either
    $F(i)=\Delta^n$ or $F(i)=\Delta^{n-1}$. Moreover, 
    the image under $F$ of every non-identity morphism $i\to j$ in $\mathcal I$ is a coface map
    $$
    d^k\colon \Delta^{n-1}\longrightarrow \Delta^n
    $$
    for some $0\leq k\leq n$.
    \item Every object $i\in\mathcal I$ with $F(i)=\Delta^{n-1}$
    is the source of exactly two distinct non-identity morphisms
    $$
    i\to j_1
    \qquad\text{and}\qquad
    i\to j_2,
    $$
    where $j_1\neq j_2$.
    Thus each copy of $\Delta^{n-1}$ is identified with a facet of exactly two distinct copies of $\Delta^n$.
    \item Every object $j\in\mathcal I$ with $F(j)=\Delta^n$ is the target of exactly $n+1$ distinct non-identity morphisms
    $$
    i_k\longrightarrow j,
    \qquad
    0\leq k\leq n,
    $$
    such that $F(i_k\to j)$ is the coface $d^k$.
    In particular, for each copy of $\Delta^n$, every facet inclusion is recorded exactly once in the diagram.
\end{enumerate}
\end{defn}

Our first main example of an $n$-facet-paired simplicial set is
$sB(k,2)$ (Definition~\ref{def:sB(k,vecr)}). We now give a second main
example.

\begin{ex}\label{ex:boundaryofsimplex}
The \emph{boundary of the standard $n$-simplex}, denoted by
$\partial\Delta^n$, is the simplicial subset of $\Delta^n$ consisting of
all simplices except the unique generating $n$-simplex
$[0,1,\ldots,n]$ and all of its degeneracies. Equivalently, it is the
simplicial subset generated by the $(n-1)$-faces
$$
[1,\ldots,n],\,
[0,2,\ldots,n],\,
\ldots,\,
[0,\ldots,n-1].
$$
In fact, $\partial\Delta^n$ is an $(n-1)$-facet-paired simplicial set.
\end{ex}

We record some immediate properties of $n$-facet-paired simplicial sets.
\begin{pro}\label{pro:facet-paired-properties}
Let $X$ be an $n$-facet-paired simplicial set. Then the following statements hold:
\begin{enumerate}
    \item $X$ is an $n$-dimensional simplicial set which is finitely generated by its
    nondegenerate $n$-simplices.

    \item For every nondegenerate simplex $\sigma\in X_n$ and every
    $0\leq i\leq n$, the face $d_i(\sigma)$ is nondegenerate.

    \item For every nondegenerate simplex $\sigma\in X_n$, $i\neq j$ implies $d_i(\sigma)\neq d_j(\sigma)$.

    \item Any two nondegenerate $n$-simplices can be joined by a chain of
$n$-simplices such that consecutive simplices in the chain are glued along
a nondegenerate $(n-1)$-face.
\end{enumerate}
\end{pro}

\begin{defn}
A bundle scenario $f\colon E\longrightarrow X$
is called \emph{$n$-facet-paired} if $f$ has finite fibers and $X$ is $n$-facet-paired.
\end{defn}

\begin{pro}\label{pro:f|E(p)connected}
Let $f\colon E \to X$ be a simplicial map, where $X$ is $n$-facet-paired. For any simplicial distribution $p \in \sDist(f)$, the restriction $f|_{E(p)}$ is an $n$-facet-paired bundle scenario.
\end{pro}
\begin{proof}
This follows directly from Proposition~\ref{pro:f|Ebundle}.
\end{proof}

\begin{lem}\label{lem:BundlePreserveNDegen}
Let $f\colon E\to X$ be a simplicial map that is discrete over vertices.
Then a simplex $e\in E_n$ is nondegenerate if and only if
$f_n(e)$ is nondegenerate.
\end{lem}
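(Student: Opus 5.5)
One direction needs nothing beyond the fact that $f$ is simplicial. If $e=s_i(e')$ is degenerate, then $f_n(e)=f_n(s_i e')=s_i(f_{n-1}(e'))$, so $f_n(e)$ is degenerate. Taking contrapositives, if $f_n(e)$ is nondegenerate then $e$ is nondegenerate.

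For the converse, suppose $f_n(e)=s_i(x)$ for some $x\in X_{n-1}$. We want to show that $e$ is degenerate, and the natural tool is the right-lifting property with respect to the codegeneracy $s^i\colon\Delta^n\to\Delta^{n-1}$ from Definition~\ref{def:RLPofBundle}. Form the square whose top map is $\Delta^n\to E$, classifying $e$, and whose bottom map is $\Delta^{n-1}\to X$, classifying $x$. By Yoneda, commutativity of this square amounts to $f_n(e)=(s^i)^*(x)=s_i(x)$, which is exactly our hypothesis.

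Being discrete over vertices, $f$ gives a diagonal $\Delta^{n-1}\to E$, classifying some $e'\in E_{n-1}$. Commutativity of the upper triangle then says $e=(s^i)^*(e')=s_i(e')$, so $e$ is degenerate.

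The only real care point is bookkeeping: we must check that the map $\Delta^n\to\Delta^{n-1}$ induced by the codegeneracy $s^i\colon[n]\to[n-1]$ corresponds, under Yoneda, to the degeneracy operator $s_i$ on simplices. This is standard, and the argument is then complete.
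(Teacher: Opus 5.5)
Your proposal is correct and follows essentially the same argument as the paper. You use simpliciality of $f$ for one direction, and for the converse you lift against the codegeneracy $s^i\colon\Delta^n\to\Delta^{n-1}$ in the square with $e$ on top and $x$ on the bottom, obtaining $e'$ with $s_i(e')=e$.
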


\begin{proof}
If $e$ is degenerate, then the simpliciality of $f$ implies that
$f_n(e)$ is degenerate.

Conversely, suppose that $f_n(e)$ is degenerate. Then there exists
$x\in X_{n-1}$ such that
$
s_i(x)=f_n(e)
$ for some $0\leq i\leq n-1$.
Consider the following commutative diagram:
$$
\begin{tikzcd}
\Delta^n \arrow[r,"e"] \arrow[d,"s^i"'] &
E \arrow[d,"f"] \\
\Delta^{n-1} \arrow[r,"x"'] \arrow[ru,dashed,"e'"] &
X.
\end{tikzcd}
$$
Since $f$ is discrete over vertices, there exists a lift
$e'\colon\Delta^{n-1}\to E$. In particular, $s_i(e')=e$.
Hence $e$ is degenerate.    
\end{proof}

\begin{pro}\label{pro:propertiesofE}
Let $X$ be an $n$-facet-paired simplicial set and let
$f\colon E\to X$ be a simplicial map that is discrete over vertices with finite fibers.
Then
\begin{enumerate}
\item $E$ is an $n$-dimensional simplicial set such that the sets $E_n^\circ$ and
$E_{n-1}^\circ$ are finite;

\item for any nondegenerate simplex $e\in E_n$ and any
$0\leq i\leq n$, the face $d_i(e)$ is also nondegenerate;

\item for any nondegenerate simplex $e\in E_n$,  $i\neq j$ implies $d_i(e)\neq d_j(e)$.
\end{enumerate}
\end{pro}
\begin{proof}
The result follows from Proposition~\ref{pro:facet-paired-properties} and Lemma~\ref{lem:BundlePreserveNDegen}.
\end{proof}

By Propositions~\ref{pro:f|Ebundle} and~\ref{pro:propertiesofE}, it follows that if $p$ is a simplicial distribution on a simplicial map
$f\colon E\to X$, where $X$ is $n$-facet-paired, then the space of supports $E(p)$ satisfies the properties listed in Proposition~\ref{pro:propertiesofE}.

\subsection{The facet matrix of the space of supports}\label{subsec:Afchi}

Now, we associate a matrix to a simplicial distribution whose
measurement space is an $n$-facet-paired simplicial set. We then show that the
dimension of the carrier of the simplicial distribution is equal to the
dimension of the null space of this matrix minus one.

\begin{defn}
Let $X$ be an $n$-facet-paired simplicial set. Define a graph $G_X$ whose vertices are the nondegenerate $n$-simplices of $X$ and whose edges are the nondegenerate $(n-1)$-simplices of $X$. Each edge $\tau$ joins the two unique vertices corresponding to the two $n$-simplices having $\tau$ as a face (see Condition~(3) in Definition~\ref{def:facet-paired}).

A \emph{direction} $\chi$ on $X$ is a choice of orientation for each edge of $G_X$. We write
$$
\sigma \rightarrow \tau \quad \text{if $\sigma$ is the head of the edge $\tau$,}
\qquad
\tau \rightarrow \sigma \quad \text{if $\sigma$ is the tail of the edge $\tau$.}
$$
\end{defn}

\begin{defn}\label{def:Afchi}
Let $\chi$ be a direction on $X$  
and let $f \colon E \to X$ be an $n$-facet-paired bundle scenario. Fix orders for 
$E^\circ_n$ and for $E^\circ_{n-1}$. 
The \emph{facet matrix} 
$
A(f,\chi)\in M_{|E^\circ_{n-1}|\times |E^\circ_n|}(\ZZ)
$
corresponding to $f$ and $\chi$, with rows indexed by $E^\circ_{n-1}$ and columns indexed by $E^\circ_n$, is defined by the following entries:
\begin{equation}\label{eq:Afchi}
A(f,\chi)_{w,e}
=
\begin{cases}
1,&\text{if $w$ is a face of $e$ and  $f_n(e) \rightarrow f_{n-1}(w)$},\\
-1,&\text{if $w$ is a face of $e$  and $f_{n-1}(w) {\rightarrow} f_n(e)$},\\
0,&\text{otherwise}.
\end{cases}
\end{equation}
\end{defn}
In Definition~\ref{def:Afchi}, the sets $E_n^\circ$ and
$E_{n-1}^\circ$ are finite by part (1) of Proposition \ref{pro:propertiesofE}. 

\begin{rem}\label{rem:samedi}
In Definition \ref{def:Afchi}, suppose that $d_j(e)=w$ and $d_i(f_n(e))=f_{n-1}(w)$. Then 
$$ 
d_j(f_n(e))=f_{n-1}(d_j(e))=f_{n-1}(w). 
$$
Since $f_n(e)$ is nondegenerate, by property (3) of Proposition \ref{pro:facet-paired-properties}, we get $i=j$.
\end{rem}

\begin{ex}\label{ex:Achif}
Let $E$ be the simplicial subset of $\Delta^4$ generated by the following nondegenerate $2$-simplices:
$$
E_2^\circ=
\{[0,1,2],[0,1,3],[0,2,3],[1,2,3],
[0,1,4],[0,2,4],[1,2,4]\}.
$$
Consider the $2$-facet-paired bundle scenario $f\colon E\longrightarrow \partial\Delta^3$ (Example~\ref{ex:boundaryofsimplex}), 
defined on vertices by
$$
f_0([0])=[0],\qquad
f_0([1])=[1],\qquad
f_0([2])=[2],\qquad
f_0([3])=[3],\qquad
f_0([4])=[3].
$$
We determine the simplicial map $f$ by its action on $E_0$. In particular,
its values on the generating $2$-simplices are obtained by applying $f_0$
to their vertices. For example,
$$
f_2([0,1,2])=[0,1,2],
\qquad
f_2([1,2,4])=[1,2,3].
$$

We choose $\chi$ so that the corresponding facet matrix $A(f,\chi)$ is
\vspace{-18pt}
    \begin{center}
        $$
        A(f,\chi) =
        \begin{array}{c|ccccccc}
            & [0,1,2] & [0,1,3] & [0,2,3] & [1,2,3] & [0,1,4] & [0,2,4] & [1,2,4]\\
            \hline
            [0,1] & 1 & -1 & 0 & 0 & -1 & 0 & 0 \\{}
            [0,2] & 1 & 0 & -1 & 0 & 0 & -1 & 0 \\{}
            [1,2] & 1 & 0 & 0 & -1 & 0 & 0 & -1 \\{}
            [0,3] & 0 & 1 & -1 & 0 & 0 & 0 & 0 \\{}
            [1,3] & 0 & 1 & 0 & -1 & 0 & 0 & 0 \\{}
            [2,3] & 0 & 0 & 1 & -1 & 0 & 0 & 0 \\{}
            [0,4] & 0 & 0 & 0 & 0 & 1 & -1 & 0 \\{}
            [1,4] & 0 & 0 & 0 & 0 & 1 & 0 & -1 \\{}
            [2,4] & 0 & 0 & 0 & 0 & 0 & 1 & -1 
        \end{array}$$
    \end{center}
\end{ex}

\begin{lem}\label{lem:lin=affin}
Let $p^{(1)},\dots, p^{(k)}$ be simplicial distributions on a simplicial map $f\colon E \to X$.
Then the following
are equivalent:
\begin{itemize}
    \item $p^{(1)},\dots, p^{(k)}$ are affinely independent.
    \item $p^{(1)},\dots, p^{(k)}$ are linearly independent.
\end{itemize}  
\end{lem}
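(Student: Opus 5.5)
The key observation is that all simplicial distributions on $f$ lie in the affine hyperplane where a fixed linear functional equals $1$. Once such a functional $\ell$ is found, a linear relation among the $p^{(i)}$ automatically has coefficient sum zero, and that is exactly what is needed.

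We regard each simplicial distribution $p$ as a vector in the real vector space $\RR^{E}$ of finitely supported functions on the simplices of $E$, with coordinates $p_{f_n(e)}(e)$ for $e\in E_n$, $n\geq 0$. This is the coordinate system in which $\sDist(f)$ is a polytope in standard form by Proposition~\ref{pro:sdistfinstanform}. Linear combinations are taken coordinatewise.

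The direction ``linearly independent $\Rightarrow$ affinely independent'' is immediate. If $\sum_i\lambda_i p^{(i)}=0$ and $\sum_i\lambda_i=0$, then in particular $\sum_i\lambda_i p^{(i)}=0$, so linear independence forces every $\lambda_i=0$.

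For the converse, assume $p^{(1)},\dots,p^{(k)}$ are affinely independent and suppose $\sum_i\lambda_i p^{(i)}=0$. We must show $\sum_i\lambda_i=0$, after which affine independence gives $\lambda_i=0$. We take $\ell$ as follows.
\begin{itemize}
\item If $X$ is nonempty, fix a vertex $x\in X_0$ and let $\ell(q)=\sum_{e\in f_0^{-1}(x)}q(e)$.
\item Every simplicial distribution $p$ satisfies $\ell(p)=\sum_{e\in f_0^{-1}(x)}p_x(e)=1$, because $p_x\in D(f_0^{-1}(x))$ is a probability distribution.
\item The sum defining $\ell$ is finite on finitely supported vectors, so $\ell$ is a genuine linear functional on the ambient space.
\end{itemize}
Applying $\ell$ to the relation gives $0=\ell\bigl(\sum_i\lambda_i p^{(i)}\bigr)=\sum_i\lambda_i\,\ell(p^{(i)})=\sum_i\lambda_i$, which is what we needed.

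There is no real obstacle here. The only points requiring care are that the vectors are compared in a fixed common coordinate space, so that linear and affine combinations make sense, and the degenerate case $X=\emptyset$. In that case $\sDist(f)$ consists of the single empty distribution, which is the zero vector. It is affinely independent on its own but not linearly independent, so the statement implicitly assumes $X\neq\emptyset$, as holds in all situations considered in the paper.
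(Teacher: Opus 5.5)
Your proof is correct and is essentially the paper's argument: the paper also sums the relation $\sum_i t_i p^{(i)}=0$ over the fiber of a simplex $\sigma$ of $X$ and uses the normalization of each $p^{(i)}_\sigma$ to conclude $\sum_i t_i=0$, leaving the trivial direction implicit, and your observation that the statement needs $X\neq\emptyset$ is a correct refinement. One harmless slip is that simplicial distributions are not finitely supported on the simplices of $E$, since the degeneracies of a supported vertex already give a nonzero coordinate in every degree; however, your functional $\ell$ only involves the fiber $f_0^{-1}(x)$, on which each $p^{(i)}_x$ is finitely supported, so the computation goes through unchanged.
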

\begin{proof}
Suppose
$
t_1p^{(1)}+\dots+t_kp^{(k)}=0.
$ 
Then, for any simplex $\sigma \in X_n$, we have
\begin{align*}
t_1+\cdots+t_k
&=
t_1\sum_{a\in f_n^{-1}(\sigma)}p_\sigma^{(1)}(a)
+\cdots+
t_k\sum_{a\in f_n^{-1}(\sigma)}p_\sigma^{(k)}(a)\\
&=
\sum_{a\in f_n^{-1}(\sigma)}
\left(
t_1p_\sigma^{(1)}(a)+\cdots+t_kp_\sigma^{(k)}(a)
\right)\\
&=
\sum_{a\in f_n^{-1}(\sigma)}
\left(t_1p_\sigma^{(1)}+\cdots+t_kp_\sigma^{(k)}\right)(a) = 0. \qedhere
\end{align*}
\end{proof}

Let $f\colon E\to X$ be a simplicial map, where $X$ is $n$-facet-paired, and let $p\in\sDist(f)$. Since $p$ is determined by its values on the generating simplices of $X$, and since $p_\sigma\in D(f_n^{-1}(\sigma))$ for every generating $n$-simplex $\sigma$ of $X$, Lemma~\ref{lem:BundlePreserveNDegen} allows us to represent $p$ by the vector
\begin{equation}\label{eq:gconnecandpvector} 
p= \left( p_{f_n(e)}(e) \right)_{e\in E(p)_n^\circ}. 
\end{equation}

\begin{pro}\label{pro:carr=facetmat}
Let $f\colon E\to X$ be a simplicial map, where $X$ is $n$-facet-paired, and let $p\in\sDist(f)$. 
Denote by $g=f|_{E(p)}$ the restriction of $f$ to the space of supports of $p$. 
Then, for any choice of direction $\chi$ on $X$, 
$$ 
\dim(\carr(p)) = \dim\bigl(\Null(A(g,\chi))\bigr)-1. 
$$
\end{pro}
\begin{proof}
First, the matrix {$A:=A(g,\chi)$} is well-defined by Proposition~\ref{pro:f|E(p)connected}. 

Since $p$ can be regarded as a simplicial distribution on $g$, we view it as the vector 
$\left(p_{f_n(e)}(e)\right)_{e \in E(p)^\circ_n}$, as in Equation~(\ref{eq:gconnecandpvector}). 
Moreover, $q\preceq p$ if and only if the defining conditions in Definition~\ref{def:GenRelation} hold on the generating simplices. Thus, such a $q$ can also be regarded as a vector 
$(q_{f_n(e)}(e))_{e \in E(p)^\circ_n}$.

We first prove that
$$
\set{q:\; q\preceq p} \subseteq \Null(A).
$$
Let $q\preceq p$ and let $w\in E(p)^\circ_{n-1}$. 
{Denote by $\sigma_1,\sigma_2$ the unique nondegenerate simplices in $X_n$ that are respectively the head and tail of $g_{n-1}(w)$, satisfying $d_{i_1}(\sigma_1)=d_{i_2}(\sigma_2)=g_{n-1}(w)$.}

Then using Equation (\ref{eq:Afchi}) and Remark \ref{rem:samedi}, we have
$$
\begin{aligned}
(Aq)_w&=\sum_{e \in E(p)^\circ_n} A_{w,e} q_{f_n(e)}(e) \\ 
&=
\sum_{e \in E(p)^\circ_n} \left(\begin{gathered} 
\sum_{\substack{e:\; d_{i_1}(e)=w\\ g_n(e)\rightarrow g_{n-1}(w)}} q_{f_n(e)}(e)-
\sum_{\substack{e:\; d_{i_2}(e)=w\\ g_{n-1}(w)\rightarrow g_n(e)}} q_{f_n(e)}(e) \end{gathered}\right)\\
&=\left(\begin{gathered} 
\sum_{e \in E(p)^\circ_n} \;\,\sum_{\substack{e:\;  d_{i_1}(e)=w \\ \sigma_1\rightarrow g_{n-1}(w)}} q_{\sigma_1}(e)\end{gathered}\right) -
\left(\begin{gathered} 
\sum_{e \in E(p)^\circ_n} \;\,\sum_{\substack{e:\;  d_{i_2}(e)=w\\ g_{n-1}(w) \rightarrow \sigma_2}} q_{\sigma_2}(e) \end{gathered}\right) \\
&=\left(\begin{gathered} 
\sum_{e \in f_n^{-1}(\sigma_1) \text{ and } d_{i_1}(e)=w } q_{\sigma_1}(e)\end{gathered}\right)
-\left(\begin{gathered} 
\sum_{e \in f_n^{-1}(\sigma_2) \text{ and } d_{i_2}(e)=w} q_{\sigma_2}(e)\end{gathered}\right) \\
&=D(d_{i_1})(q_{\sigma_1})(w)-D(d_{i_2})(q_{\sigma_2})(w) \\
&=q_{d_{i_1}(\sigma_1)}(w)-q_{d_{i_2}(\sigma_2)}(w) =0
\end{aligned}
$$

Hence $q \in \Null(A)$. By Proposition~\ref{pro:yleqxVsuppcarr}, we conclude that
\begin{equation}\label{eq:carrsubseteqnull}
\carr(p) \subseteq \Null (A)
\end{equation}
Suppose that $\dim(\Null(A))=r$. Then, by
Lemma~\ref{lem:lin=affin} and~\eqref{eq:carrsubseteqnull}, there cannot be
$r+1$ affinely independent points in $\carr(p)$. Therefore,
\begin{equation}\label{eq:dimcarrleqrnew}
\dim(\carr(p))\leq r-1,
\end{equation}
see Definition~\ref{def:dimF}.

Let $k=|E(p)^\circ_n|$. We extend $p$ to a basis of $\Null(A)$:
$$
p,x^{(2)},\dots,x^{(r)}\in\RR^k.
$$
For every $2\leq i\leq r$, there exists $t_i$ such that
$(x^{(i)}+t_i p)_j\geq 0$ for all $1\leq j\leq k$. Hence we may construct an alternative basis of $\Null(A)$ with all coordinates nonnegative:
$$
p,\,y^{(2)}=x^{(2)}+t_2p,\,\dots,\,y^{(r)}=x^{(r)}+t_rp
$$

Fix $y=y^{(i)}$ for some $2\leq i\leq r$. For every nondegenerate $\sigma\in X_n$, define
$$
T_{y,\sigma}=\sum_{e\in g_n^{-1}(\sigma)}y_e.
$$
Let $\tau\in X_{n-1}$ be nondegenerate with
$\sigma_1\rightarrow\tau\rightarrow\sigma_2$. Then
\begin{equation}\label{eq:prenonsignaling}
\begin{aligned}
T_{y,\sigma_1}-T_{y,\sigma_2}
&=\sum_{e\in g_n^{-1}(\sigma_1)}y_e
-\sum_{e\in g_n^{-1}(\sigma_2)}y_e\\
&=\sum_{w\in g_{n-1}^{-1}(\tau)}
\left(\begin{gathered} 
\sum_{\substack{e\in g_n^{-1}(\sigma_1)\\ w\text{ is a face of }e}}y_e
-
\sum_{\substack{e\in g_n^{-1}(\sigma_2)\\ w\text{ is a face of }e}}y_e
\end{gathered}\right)\\
&=\sum_{w\in g_{n-1}^{-1}(\tau)} \;
\sum_{e\in E(p)^\circ_n}A_{w,e}y_e\\
&=\sum_{w\in g_{n-1}^{-1}(\tau)}(Ay)_w=0.
\end{aligned}
\end{equation}
Thus $T_{y,\sigma_1}=T_{y,\sigma_2}$. Since $X$ is $n$-facet-paired, any two nondegenerate $n$-simplices are connected through a chain of shared $(n-1)$-faces. Hence $T_{y,\sigma}$ is independent of $\sigma$, and we denote this common value by $T_y$.

Since $y\neq 0$ and $y_e\geq 0$ for every $e\in E(p)_n^\circ$, we have $T_y>0$. We now define a simplicial distribution $q\in\sDist(f)$ by
\begin{equation}\label{eq:defqnew}
q_{\sigma}(e)=
\begin{cases}
\dfrac{y_e}{T_y}, & \text{if $e\in E(p)^\circ_n$ and $f_n(e)=\sigma$,}\\
0, & \text{otherwise,}
\end{cases}
\end{equation}
at every generating simplex $\sigma\in X_n$.

First, $q_\sigma$ is normalized, since
$$
\sum_{e\in f_n^{-1}(\sigma)}q_\sigma(e)
=
\frac{1}{T_y}\sum_{e\in g_n^{-1}(\sigma)}y_e
=
\frac{T_y}{T_y}=1.
$$
To show that $q$ defines a simplicial distribution, since $X$ is $n$-facet-paired, it suffices to verify compatibility on the common nondegenerate $(n-1)$-faces.
Let $w\in E(p)_{n-1}^\circ$ satisfy
$\sigma_1\rightarrow f_{n-1}(w)\rightarrow\sigma_2$, where $d_{i_1}(\sigma_1)=f_{n-1}(w)=d_{i_2}(\sigma_2)$. Then
$$
\begin{aligned}
&\sum_{\substack{e\in g_n^{-1}(\sigma_1)\\ d_{i_1}(e)=w}}q_{\sigma_1}(e)
-
\sum_{\substack{e\in g_n^{-1}(\sigma_2)\\ d_{i_2}(e)=w}}q_{\sigma_2}(e)\\
&=
\frac{1}{T_y}
\left(\begin{gathered}
\sum_{\substack{e\in g_n^{-1}(\sigma_1)\\ d_{i_1}(e)=w}}y_e
-
\sum_{\substack{e\in g_n^{-1}(\sigma_2)\\ d_{i_2}(e)=w}}y_e
\end{gathered}\right)\\
&=
\frac{1}{T_y}\sum_{e\in E(p)^\circ_n}A_{w,e}y_e
=
\frac{1}{T_y}(Ay)_w=0.
\end{aligned}
$$
Thus, we obtain a basis of $\Null(A)$ consisting of simplicial distributions:
$$
p,z^{(2)},\dots,z^{(r)},
\qquad
z^{(i)}=\frac{y^{(i)}}{T_{y^{(i)}}}.
$$
By~\eqref{eq:defqnew}, we have $z^{(i)}\preceq p$ for every $2\leq i\leq r$. Hence, by Proposition~\ref{pro:yleqxVsuppcarr}, $z^{(i)}\in\carr(p)$
for all $2\leq i\leq r$. Therefore, $\carr(p)$ contains $r$ linearly independent, and hence affinely independent, simplicial distributions. Combining this with~\eqref{eq:dimcarrleqrnew}, we conclude that
\begin{equation*}
\dim(\carr(p))=r-1.\qedhere
\end{equation*}
\end{proof}


\section{The homology of the space of supports}\label{sec:Homolgcar}

In this section, we establish a connection between the dimension of the carrier of a simplicial distribution and 
the homology of its support. 
We begin by recalling the basic notions of simplicial homology and normalized chain complexes that will be used throughout the section.

\subsection{The homology of simplicial sets}

Homology of simplicial sets is {a} fundamental homotopy {invariant} in algebraic topology 
(see e.g.\ \cite[Section I.1]{goerss2009simplicial}).

We begin by fixing some notations for chain complexes. Let $(C_\bullet,\partial_\bullet)$ be a chain complex of abelian groups. The groups of \emph{$n$-cycles} and \emph{$n$-boundaries} are defined, respectively, by
$$ 
Z_n(C_\bullet):=\ker(\partial_n) \quad \text{and} \quad B_n(C_\bullet):=\operatorname{im}(\partial_{n+1}). 
$$ 
Since $\partial_n\circ\partial_{n+1}=0$, we have $B_n(C_\bullet)\subseteq Z_n(C_\bullet)$. The $n$-th homology group of $C_\bullet$ is 
$$
H_n(C_\bullet) := \frac{Z_n(C_\bullet)}{B_n(C_\bullet)}. 
$$

Let $X$ be a simplicial set.
The \emph{group of $n$-chains} of $X$ is the free abelian group generated by the set of $n$-simplices:
$$
C_n(X) := \ZZ[{X_n}].
$$

The boundary homomorphism $\partial_n=\partial^X_n\colon C_n(X) \to C_{n-1}(X)$
is defined on {the} generating simplices $\sigma \in X_n$ by
$$
\partial_n(\sigma)
=
\sum_{i=0}^{n} (-1)^i \, d_i(\sigma),
$$

The simplicial identities imply that $\partial_{n-1} \circ \partial_n = 0$,
so $(C_\bullet(X), \partial_\bullet)$ is a chain complex, called the
\emph{simplicial chain complex} of $X$. Then, the \emph{$n$-th homology group of $X$} is defined by
$$
H_n(X) 
=\frac{Z_n\left(C_\bullet(X)\right)}{B_n\left(C_\bullet(X)\right)}=
\ker(\partial_n) \big/ \operatorname{im}(\partial_{n+1}).
$$
For our purposes, {we will} work with the normalized chain complex, which has the same homology as the simplicial chain complex. {This relationship} is discussed in detail in \cite[Section III.2]{goerss2009simplicial} and \cite[Proposition 16.2, Corollary 22.3]{may1992simplicial}.
\begin{defn}
Let $X$ be a simplicial set.
The \emph{degenerate subgroup} $D_n(X)\subseteq C_n(X)$ is the subgroup
generated by all degenerate $n$-simplices of $X$. Equivalently,
$$
D_n(X)
=
\ZZ(\set{
s_i(\sigma) :\;
\sigma\in X_{n-1},\ 0\leq i\leq n-1})
$$

The \emph{normalized chain group} of $X$ is defined by
$$
N_n(X):=C_n(X)/D_n(X).
$$
The boundary maps on $C_\bullet(X)$ induce boundary maps
$$
\partial_n\colon N_n(X)\longrightarrow N_{n-1}(X),
$$
and hence $N_\bullet(X)$ is a chain complex, called the
\emph{normalized chain complex} of $X$.
\end{defn}

Let $X$ be a simplicial set. {The} nondegenerate
$n$-simplices form a basis for the normalized chain group $N_n(X)$.
In particular,
$$
N_n(X)
\cong
\bigoplus_{\substack{\sigma\in X_n\\
\sigma\text{ nondegenerate}}}
\mathbb{Z}\,[\sigma].
$$
Consequently, every element $c\in N_n(X)$ {can be uniquely expressed as}
$$
c
=
\sum_{\substack{\sigma\in X_n\\
\sigma\text{ nondegenerate}}}
\alpha_\sigma[\sigma],
\qquad
\alpha_\sigma\in\mathbb{Z},
$$
where only finitely many coefficients $\alpha_\sigma$ are nonzero.

If $X$ is an $n$-dimensional simplicial set, then $N_{n+1}(X)=0$. Consequently, $B_n(N_\bullet X)=0$.
Therefore,
\begin{equation}\label{eq:Hn(X)=ker}
H_n(X)=\frac{Z_n(N_\bullet X)}{B_n(N_\bullet X)}
\cong
Z_n(N_\bullet X).
\end{equation}


\subsection{The dimension of the carrier in terms of homology}

We now relate the dimension of the carrier of a simplicial distribution to the top-dimensional homology of its support. To this end, we introduce a matrix associated with the normalized boundary map and compare it with the facet matrix $A(f,\chi)$ introduced in Section \ref{subsec:Afchi}.

\begin{defn}\label{def:BEEEE}
Let $E$ be an $n$-dimensional simplicial set such that
$E_n^\circ$ and $E_{n-1}^\circ$ are finite, and suppose that for every
nondegenerate simplex $e\in E_n^\circ$ and every $0\leq i\leq n$, the face
$d_i(e)$ is also nondegenerate.

Fix orderings of $E_n^\circ$ and $E_{n-1}^\circ$. We define the {boundary} matrix
$B(E)\in M_{|E_{n-1}^\circ|\times |E_n^\circ|}(\ZZ)$
by setting, for every $(w,e)\in E_{n-1}^\circ\times E_n^\circ$,
$$
B(E)_{w,e}
=
\sum_{\substack{0\leq k\leq n\\ d_k(e)=w}}
(-1)^k,
$$
where the sum is understood to be zero if $w$ is not a face of $e$.

Equivalently, $B(E)$ is the matrix representation of the normalized
boundary map
$$
\partial_n\colon N_n(E)\longrightarrow N_{n-1}(E)
$$
with respect to the ordered bases given by the nondegenerate
$n$-simplices and $(n-1)$-simplices of $E$.
\end{defn}

\begin{defn}\label{def:Bfff}    
Let $f \colon E \to X$ be an $n$-facet-paired bundle scenario. 
We define $B(f)$ to be $B(E)$.   
\end{defn}
The conditions on $E$ required in Definition~\ref{def:BEEEE} are satisfied by parts~(1) and~(2) of Proposition~\ref{pro:propertiesofE}. Moreover, $E_n^\circ$ and $E_{n-1}^\circ$ are finite since $X$ is finitely generated and the fibers of $f$ are finite. Hence the matrix $B(f)$ is well defined.

 \begin{ex}\label{ex:Bffff}
Let $f$ be as in Example \ref{ex:Achif}. Then 
    \begin{center}
        $$B(f) =
        \begin{array}{c|ccccccc}
            & [0,1,2] & [0,1,3] & [0,2,3] & [1,2,3] & [0,1,4] & [0,2,4] & [1,2,4]\\
            \hline
            [0,1] & 1 & 1 & 0 & 0 & 1 & 0 & 0 \\{}
            [0,2] & -1 & 0 & 1 & 0 & 0 & 1 & 0 \\{}
            [1,2] & 1 & 0 & 0 & 1 & 0 & 0 & 1 \\{}
            [0,3] & 0 & -1 & -1 & 0 & 0 & 0 & 0 \\{}
            [1,3] & 0 & 1 & 0 & -1 & 0 & 0 & 0 \\{}
            [2,3] & 0 & 0 & 1 & 1 & 0 & 0 & 0 \\{}
            [0,4] & 0 & 0 & 0 & 0 & -1 & -1 & 0 \\{}
            [1,4] & 0 & 0 & 0 & 0 & 1 & 0 & -1 \\{}
            [2,4] & 0 & 0 & 0 & 0 & 0 & 1 & 1 \\
        \end{array}$$
    \end{center}
 \end{ex}

\begin{defn}
We say that an $n$-dimensional simplicial set $X$ is \emph{orientable} if
there exists a choice of signs 
$\epsilon_\sigma\in\{+1,-1\}$, $\sigma\in X_n^\circ$, such that
$$
\sum_{\sigma \in X_n^\circ}\epsilon_{\sigma}\sigma
$$
is an $n$-cycle in the normalized chain complex $N_\bullet(X)$.
\end{defn}

\begin{pro}\label{pro:rank-A(f)-rank-B(f)}

Let $X$ be an orientable $n$-dimensional simplicial set, let $f\colon E\to X$ be an $n$-facet-paired bundle scenario, and let $\chi$ be a direction on $X$. Then
$$
{\rank}A(f,\chi)={\rank}B(f)
$$
\end{pro}

\begin{proof}
{Fix orderings of $E^\circ_n$ and $E^\circ_{n-1}$, thereby inducing ordered bases of $N_n(E)$ and $N_{n-1}(E)$,} respectively. Then $B(f)$ is the matrix {representation}
of the normalized boundary map
$\partial^E_n\colon N_n(E)\to N_{n-1}(E)$
with respect to these bases. To prove the result, we will choose signs
$a_e,b_\tau\in\set{1,-1}$ for all
$e\in E^\circ_n$ and $\tau\in E^\circ_{n-1}$,
{based} on the direction $\chi$, such that the matrix {representation} of
$\partial^E_n$ with respect to the ordered bases
$\{a_e e\}_{e\in E^\circ_n}$ and
$\{b_\tau\tau\}_{\tau\in E^\circ_{n-1}}$,
with the original orderings preserved, is precisely $A(f,\chi)$.

By orientability, there exist signs
$$
\{\epsilon_\sigma\}_{\sigma \in X_n^{\circ}}\in\{+1,-1\}
$$
such that $\partial_n(\sum_{\sigma\in X_n^\circ}\epsilon_\sigma \sigma) = 0$.
    For each $e\in E_n^\circ$, since $f$ is discrete over vertices, {$f_n(e)\in X^\circ_n$ by Lemma \ref{lem:BundlePreserveNDegen}.} So we define
$$
a_e = \epsilon_{f_n(e)}.
$$

    For each $\tau\in E_{n-1}^\circ$, {since} $X$ is $n$-facet-paired, there exist exactly two distinct simplices $\sigma_1,\sigma_2\in X_n^\circ$ {such that} $d_{i_1}(\sigma_1) = d_{i_2}(\sigma_2) =f_{n-1}(\tau)$ for some $i_1,i_2$. Without loss of generality, we let $\sigma_1$ be the head and let $\sigma_2$ be the tail of $f_{n-1}(\tau)$ in $\chi$, and then define
    $$
    b_\tau = (-1)^{i_1}\epsilon_{\sigma_1} = (-1)^{i_2+1}\epsilon_{\sigma_2}.
    $$

    To see that $b_\tau$ is well defined, {since} $\partial_n(\sum_{\sigma\in X_n^\circ}\epsilon_\sigma \sigma) = 0$ and $\sigma_1,\sigma_2$ are the only nondegenerate simplices that have 
    $f_{n-1}(\tau)$ as a face, we obtain that
    $$
    0 =\partial_n (\epsilon_{\sigma_1}\sigma_1 + \epsilon_{\sigma_2}\sigma_2 +\dots) 
    = \left((-1)^{i_1}\epsilon_{\sigma_1} + (-1)^{i_2}\epsilon_{\sigma_2}\right)f_{n-1}(\tau) +\dots.
    $$
Then $(-1)^{i_1}\epsilon_{\sigma_1} + (-1)^{i_2}\epsilon_{\sigma_2} = 0$. Hence $b_\tau$ is well-defined.


Now, let $B'(f)$ be the matrix representation of $\partial^E_n$ with respect to the ordered bases $\{a_ee\}_{e\in E^\circ_n}$ and $\{b_\tau\tau\}_{\tau\in E_{n-1}^\circ}$. Fix $e\in E^\circ_n$ and $\tau \in E^\circ_{n-1}$. We show that $B'(f)_{\tau,e} = A(f,\chi)_{\tau,e}$. 

If for some $k$, we have $d_k(e)=\tau$, then 
$$
\begin{aligned}
\partial^{E}_n(a_e e)&=a_e\partial^E_n(e) \\
&=a_e \sum_{i=0}^{n} (-1)^i \, d_i(e)  \\
&=a_e(-1)^k\tau +  \sum_{i\neq k} a_e (-1)^i \, d_i(e) \\
&=a_e(-1)^k(b_{\tau})^2\tau +  \sum_{i\neq k} a_e (-1)^i \, d_i(e) \\
&=a_e b_\tau(-1)^k b_{\tau}\tau +  \sum_{i\neq k} a_e (-1)^i \, d_i(e) \\
\end{aligned}
$$
By part (3) of Proposition \ref{pro:propertiesofE}, 
there is no $i \neq k$ with $d_i(e)=\tau$. As a result,
$$
\begin{aligned}
B'(f)_{\tau,e} = a_e b_{\tau} (-1)^k = \begin{cases}
\epsilon_{f_n(e)} (-1)^k \epsilon_{f_n(e)} (-1)^k=1    & \text{if } f_n(e) \text{ is the head of the edge } f_{n-1}(\tau), \\
\epsilon_{f_n(e)} (-1)^{k+1} \epsilon_{f_n(e)} (-1)^k=-1    & \text{if } f_n(e) \text{ is the tail of the edge } f_{n-1}(\tau). \\    
\end{cases}
\end{aligned}
$$
If $\tau$ is not a face of $e$, then clearly $B'(f)_{\tau,e}=0$.

So we proved that $B'(f)_{\tau,e}$ coincides with $A(f,\chi)_{\tau,e}$ according to Definition \ref{def:Afchi}.
\end{proof}

\begin{ex}
The matrices $A(f,\chi)$ and $B(f)$ of Examples \ref{ex:Achif} and \ref{ex:Bffff} both have rank $5$.   
\end{ex}

\begin{thm}\label{thm:Dimcar=Homolgy}
Let $X$ be an orientable, $n$-facet-paired simplicial set, and let $f\colon E \to X$ be a simplicial map. For any simplicial distribution 
$p$ in $\sDist(f)$, we have 
$$
\dim(\carr(p))= \rank\left( H_n(E(p))\right)-1
$$
\end{thm}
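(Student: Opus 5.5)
The plan is to chain together Proposition~\ref{pro:carr=facetmat}, Proposition~\ref{pro:rank-A(f)-rank-B(f)}, and the identification \eqref{eq:Hn(X)=ker} of top homology with cycles. Set $g=f|_{E(p)}\colon E(p)\to X$. By Proposition~\ref{pro:f|E(p)connected}, $g$ is an $n$-facet-paired bundle scenario. Hence $A(g,\chi)$ and $B(g)=B(E(p))$ are both defined for any direction $\chi$ on $X$, which we fix arbitrarily. Proposition~\ref{pro:carr=facetmat} then gives
$$
\dim(\carr(p))=\dim\bigl(\Null(A(g,\chi))\bigr)-1 .
$$

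Next I would convert nullity into rank and compare the two matrices. Both $A(g,\chi)$ and $B(g)$ have $|E(p)^\circ_n|$ columns, so rank–nullity gives
$$
\dim\Null(A(g,\chi))=|E(p)^\circ_n|-\rank A(g,\chi)=|E(p)^\circ_n|-\rank B(g)=\dim\Null_{\RR}(B(g)).
$$
The middle equality is Proposition~\ref{pro:rank-A(f)-rank-B(f)}; it applies because $X$ is orientable and $n$-dimensional (Proposition~\ref{pro:facet-paired-properties}). Its proof actually exhibits $A(g,\chi)$ as $B(g)$ rescaled by signs on rows and columns, so the equality of ranks holds over both $\QQ$ and $\RR$ without further argument.

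It remains to identify $\dim_{\RR}\Null(B(g))$ with $\rank H_n(E(p))$. By Proposition~\ref{pro:propertiesofE}, $E(p)$ is $n$-dimensional, so \eqref{eq:Hn(X)=ker} gives $H_n(E(p))\cong Z_n(N_\bullet E(p))=\ker(\partial_n)$. Here $\partial_n\colon \ZZ^{|E(p)^\circ_n|}\to\ZZ^{|E(p)^\circ_{n-1}|}$ is represented by the integer matrix $B(g)$ (Definition~\ref{def:BEEEE}). The integer kernel is a free abelian group, being a subgroup of a finitely generated free group. Its rank equals $\dim_{\QQ}$ of the rational kernel, because tensoring with $\QQ$ is exact, and this equals $\dim_\RR\Null(B(g))$, because rank over a field extension is unchanged. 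Combining the three displays yields $\dim(\carr(p))=\rank H_n(E(p))-1$.

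I expect the main obstacle to be this last step: carefully justifying that the rank of the integer kernel agrees with the real nullity used in Proposition~\ref{pro:carr=facetmat}. One also has to check that the hypotheses of Proposition~\ref{pro:rank-A(f)-rank-B(f)} hold for $g$ rather than $f$, i.e., that $g$ has finite fibers and that $\chi$ on $X$ is the same direction in both propositions.
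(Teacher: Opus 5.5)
Your proposal is correct and matches the paper's proof: it chains Proposition~\ref{pro:carr=facetmat}, Proposition~\ref{pro:rank-A(f)-rank-B(f)} (applied to $g=f|_{E(p)}$), and Equation~\eqref{eq:Hn(X)=ker}, exactly as the paper does. Your extra argument, that the rank of the integral kernel $\ker\partial_n^{E(p)}$ equals the real nullity of $B(g)$ by tensoring with $\mathbb{Q}$, fills in a step the paper leaves implicit.
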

\begin{proof}
Fix a direction $\chi$ for $X$. By Proposition \ref{pro:f|E(p)connected}, $f|_{E(p)}$ is an $n$-facet-paired scenario. Then,
by Propositions \ref{pro:carr=facetmat}, \ref{pro:rank-A(f)-rank-B(f)}, and Equation (\ref{eq:Hn(X)=ker}), 
\begin{equation*}
\begin{aligned}[b]
\dim(\carr(p))&=\dim \left(\Null(A(f|_{E(p)},\chi))\right)- 1\\
&= \dim \left(\Null(B(f|_{E(p)}))\right)- 1 \\
&=\rank(\ker \partial_n^{E(p)})-1 \\
&=\rank\left( H_n(E(p))\right)-1.
\end{aligned}
\qedhere
\end{equation*}
\end{proof}

The following examples demonstrate the necessity of the $n$-facet-paired hypothesis.
\begin{ex}
Let $p$ be the simplicial distribution on
$f_{\Delta^1,2}\colon E(\Delta^1,2)\to \Delta^1$
(see Definition~\ref{def:BXm}), defined by
\begin{equation}\label{eq:distoneedge}
p_{[0,1]}
=
\begin{pmatrix}
1/2 & 0\\
0 & 1/2
\end{pmatrix}.
\end{equation}
Then we have the following bundle scenario
$f_{\Delta^1,2}|_{E(p)}\colon E(p)\to \Delta^1$:

\begin{figure}[h!] 
\begin{center}
\includegraphics[width=0.4\textwidth]{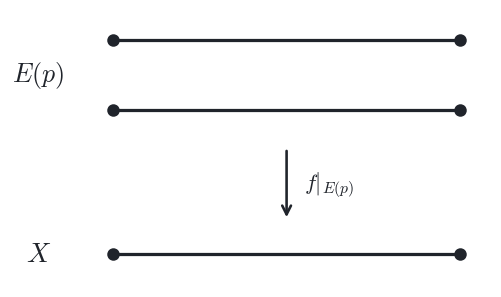}
\caption{The restriction of $f_{\Delta^1,2}$ to the space of supports of the simplicial distribution defined in Equation (\ref{eq:distoneedge}).} 
        \label{fig:towedgessupport}
\end{center}
\end{figure}  
The simplicial set $\Delta^1$ is not $1$-facet-paired, since each of
the $0$-simplices $[0]$ and $[1]$ is a face of only one nondegenerate
$1$-simplex, namely $[0,1]$. Indeed,
$$
\carr(p)=\Conv\set{\begin{pmatrix}
1 & 0\\
0 & 0
\end{pmatrix}, \begin{pmatrix}
0 & 0\\
0 & 1
\end{pmatrix}},   \text{  so } \dim(\carr(p))=1,
$$
but
$$
\rank\left(H_1(E(p))\right)-1
=
0-1
=
-1.
$$
\end{ex}

\begin{ex}
Consider the simplicial distribution $p$ of Example~\ref{ex:233bellvertex}. Note that
$$
|E(p)_0|=3|sB(2,3)_0|=18.
$$
Moreover, every nonzero entry of each $p_{x_i,y_j}$ corresponds to an edge of
$E(p)_1^\circ$. Since $E(p)$ is connected, the Euler characteristic formula
for connected graphs \cite{hatcher2002algebraic} gives
$$
\rank H_1(E(p))=32-18+1=15.
$$
On the other hand, $p$ is a vertex, so $\dim(\carr(p))=0$.
\end{ex}

\begin{ex}
Let $X$ be the simplicial set illustrated at the bottom of Figure \ref{fig:conterex2}. 
Consider the simplicial distribution $p$ on
$f_{X,2}\colon E(X,2)\to X$ defined as follows: 
\begin{equation}\label{eq:secondcounterex}
\begin{aligned}
p_{[1,2,4]},p_{[2,4,5]},p_{[2,3,5]},p_{[3,5,6]},p_{[1,3,6]},p_{[1,4,6]}
&=
\begin{pmatrix}
1/2 & 0\\
0 & 0
\end{pmatrix}  \qquad
\begin{pmatrix}
0 & 0\\
0 & 1/2
\end{pmatrix},
\\[1ex]
p_{[0,1,2]},p_{[0,1,3]},p_{[0,2,3]},p_{[0,4,5]},p_{[0,4,6]},p_{[0,5,6]}
&=
\begin{pmatrix}
0 & 0\\
1/2 & 0
\end{pmatrix} \qquad
\begin{pmatrix}
0 & 1/2\\
0 & 0
\end{pmatrix}.
\end{aligned}
\end{equation}
See Example \ref{ex:simpdisanydim}.
The bundle scenario
$f_{X,2}|_{E(p)}\colon E(p)\to X$ is shown in Figure \ref{fig:conterex2}. 

\begin{figure}[h!] 
\begin{center}
\includegraphics[width=0.44\textwidth]{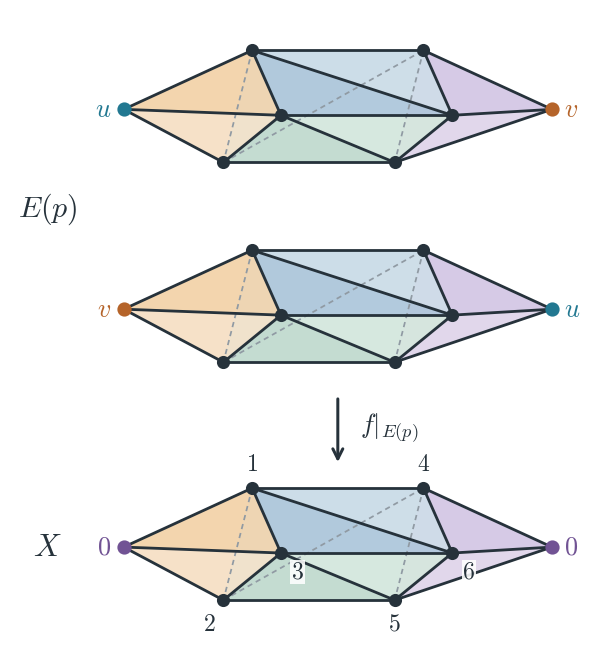}
\caption{The corresponding bundle scenario for the simplicial distribution defined in Equation (\ref{eq:secondcounterex}). {Vertices} of the same non-black color are identified.} 
    \label{fig:conterex2}
\end{center}
\end{figure}  
The bundle scenario $f_{X,2}|_{E(p)}$ is strongly connected
in the sense of \cite[Definition 4.11]{kharoof2026possibilistic}.
Thus, by \cite[Theorem 4.13]{kharoof2026possibilistic},
$p$ is a vertex of $\sDist(f_{X,2})$.
On the other hand,
$\rank\left(H_2(E(p))\right)=2$. Hence,
$$
\dim(\carr(p))=0
\quad\text{but}\quad
\rank\left(H_2(E(p))\right)-1=1.
$$
Thus, the $2$-facet-paired hypothesis cannot be replaced
by connectivity of the nondegenerate $2$-simplices alone.
\end{ex}

\section{The algebraic characterization of the dimension of the carrier}\label{sec:strongconnct}

In this section, we refine the algebraic characterization of the
carrier dimension by reducing the size of the matrix $A(f,\chi)$
introduced in Section~\ref{subsec:Afchi}.
The main idea is to decompose the space of supports into strongly
connected components and encode the relations between these
components in a smaller matrix. 

\subsection{The strong connectivity relation}

We introduce a connectivity relation on the nondegenerate
$n$-simplices of the source of a simplicial map.
This is a restricted version of the strong connectivity relation
introduced in \cite[Definition 4.11]{kharoof2026possibilistic},
where simplices of arbitrary dimensions may be connected along
common faces. Here, we consider only nondegenerate $n$-simplices
connected along nondegenerate $(n-1)$-faces.
The resulting strongly connected components will be used
to construct the reduced matrix.

\begin{defn}\label{def:strong-connectivity}
Let $f\colon E\to X$ be a simplicial map and $n\geq 1$. Consider three simplices
$e_1\in E^\circ_{n}$, $e_2\in E^\circ_{n}$, and $e\in E^\circ_{n-1}$. 
We write $e_1 \sim_{e,f} e_2$ (or shortly $e_1 \sim_{e} e_2$) if 
there exist $i_1$ and $i_2$ such that
$$
d_{i_1}(e_1)=e=d_{i_2}(e_2), 
$$
and the following lifting problems admit unique solutions:
\begin{equation}\label{dia:e_1xe_2}
\begin{tikzcd}[column sep=huge, row sep=large]
\Delta^{n-1} \arrow[d,hook,"d^{i_1}"] \arrow[r,"e"] & E \arrow[d,"f"]
&
\Delta^{n-1} \arrow[d,hook,"d^{i_2}"] \arrow[r,"e"] & E \arrow[d,"f"] \\
\Delta^{n} \arrow[ru,dashed] \arrow[r,"f_{n}(e_1)"] & X
&
\Delta^{n}\arrow[ru,dashed] \arrow[r,"f_{n}(e_2)"] & X .
\end{tikzcd}
\end{equation}
The unique solutions are required to be the simplicial maps
$$
\Delta^{n}\xrightarrow{\,e_1\,}E
\qquad\text{and}\qquad
\Delta^{n}\xrightarrow{\,e_2\,}E.
$$
We say that $e_1$ and $e_2$ are \emph{$f$-strongly connected} if $e_1=e_2$ or 
there exist simplices $e'_1,\dots,e'_{\ell}\in E^\circ_n$ and 
$\tilde{e}_1,\dots,\tilde{e}_{\ell+1}\in E^\circ_{n-1}$ such that
$$
e_1\sim_{\tilde{e}_1} e'_1\sim_{\tilde{e}_2} \cdots \sim_{\tilde{e}_{\ell}} e'_\ell \sim_{\tilde{e}_{\ell+1}} e_2.
$$
This defines an equivalence relation on $E^\circ_n$, denoted by $\sim_f$.
\end{defn}

\begin{ex}\label{ex:conncompvertex}
Consider the $(3,2,2)$ Bell scenario, consisting of three
observers, each with two measurements and two outcomes per
measurement. This scenario is represented by the simplicial map
$f_{sB(3,2),2}$. The measurement space $sB(3,2)$ is a $2$-dimensional
simplicial set generated by the elements of
$A_1\times A_2\times A_3$, where
$$
A_1=\{x_1,x_2\},\quad
A_2=\{y_1,y_2\},\quad
A_3=\{z_1,z_2\}.
$$

We write the vertex number (39) in \cite[Table 1]{pironio2011extremal} as a simplicial distribution $p$ on $f_{sB(3,2),2}$: 

\noindent The distributions $p_{x_1,y_1,z_1}$, $p_{x_1,y_1,z_2}$, and $p_{x_1,y_2,z_1}$ are given by 
$$
    \begin{pmatrix}
    1/3 & 0 \\
    0 & 1/6
    \end{pmatrix}
    \qquad
    \begin{pmatrix}
    0 & 1/3 \\
    1/6 & 0
    \end{pmatrix}.
    $$
The distribution $p_{x_1,y_2,z_2}$ is given by
    $$
    \begin{pmatrix}
    0 & 1/3 \\
    1/6 & 0
    \end{pmatrix}
    \qquad
    \begin{pmatrix}
    1/3 & 0 \\
    0 & 1/6
    \end{pmatrix}
    $$
The distribution $p_{x_2,y_1,z_1}$ is given by
    $$
    \begin{pmatrix}
    1/3 & 0 \\
    0 & 1/6
    \end{pmatrix}
    \qquad
    \begin{pmatrix}
    0 & 1/6 \\
    1/6 & 1/6
    \end{pmatrix}.
    $$
The distributions $p_{x_2,y_1,z_2}$ and $p_{x_2,y_2,z_1}$ are given by
    $$
    \begin{pmatrix}
    1/6 & 1/6 \\
    1/6 & 0
    \end{pmatrix}
    \qquad
    \begin{pmatrix}
    1/6 & 0 \\
    0 & 1/3
    \end{pmatrix}.
    $$
The distribution $p_{x_2,y_2,z_2}$ is given by
    $$
    \begin{pmatrix}
    1/6 & 1/6 \\
    0 & 1/6
    \end{pmatrix}
    \qquad
    \begin{pmatrix}
    1/6 & 0 \\
    1/6 & 1/6
    \end{pmatrix}.
    $$
The strongly connected components of
$f|_{E(p)}$ are illustrated in Figure~\ref{fig:strongcomp2}.

\begin{figure}[h!] 
    \begin{center}
        \includegraphics[width=0.9\textwidth]{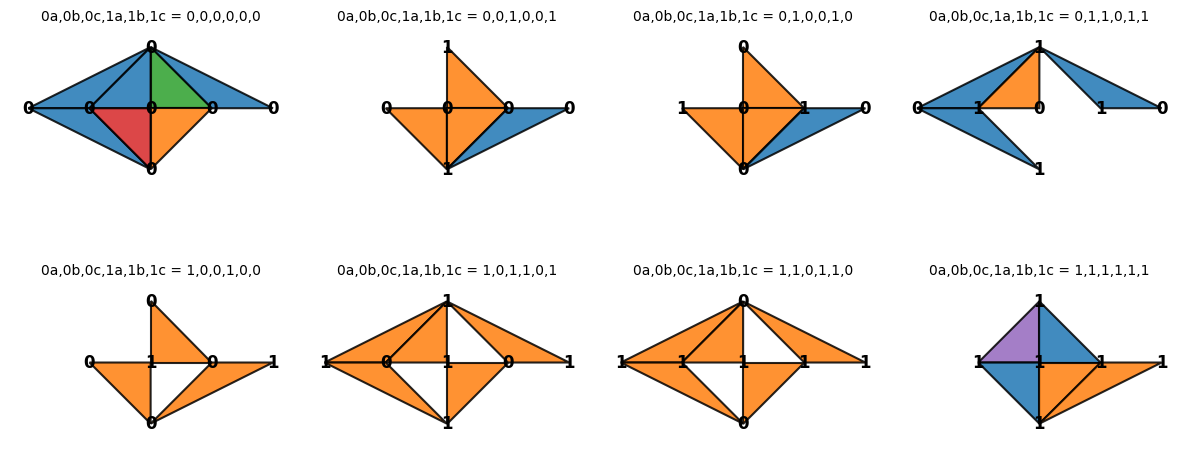}
        \caption{The strongly connected components corresponding to the
simplicial distribution in Example \ref{ex:conncompvertex}}
        \label{fig:strongcomp2}
    \end{center}
\end{figure}  
    
\end{ex}

\subsection{The reduced facet matrix}

We now introduce a reduced version of the facet matrix, defined in Section \ref{subsec:Afchi}, 
using the strongly connected components of $f$.
The resulting matrix records the relations between distinct
strongly connected components along their common
$(n-1)$-faces.

\begin{defn}\label{def:thematrixX(p)}
Let $f\colon E\to X$ be an $n$-facet-paired bundle scenario,
and let
$$
E^{(1)},\dots,E^{(k)}
$$
denote its strongly connected components.

Let $w_1,\dots,w_N$ be the nondegenerate $(n-1)$-simplices
of $E$ that are faces of nondegenerate $n$-simplices
belonging to distinct strongly connected components.

Given a direction $\chi$ on $X$, we define the
\emph{reduced facet matrix}   
$
C(f,\chi)\in M_{N\times k}(\ZZ)
$
by
$$
C(f,\chi)_{i,j}
=
\sum_{e\in E^{(j)}} A(f,\chi)_{w_i,e}.
$$
See Definition \ref{def:Afchi}.

%
\end{defn}
%

\begin{ex}
Consider the measurement space $sB(2,2)$ and the following
simplicial distribution $p$ on $f_{sB(2,2),2}$
(see Example~\ref{ex:CHSHscenario}):
\[
\scalebox{0.85}{$
\begin{tikzcd}[ampersand replacement=\&]
{v_1} \arrow[rrr, "{\begin{pmatrix}11/16 & 0 \\ 0 & 5/16\end{pmatrix}}"]
\& \& \&
{v_2} \arrow[ddd, "{\begin{pmatrix}1/16 & 5/8 \\ 0 & 5/16\end{pmatrix}}"] \\
\& \& \& \\
\& \& \& \\
{v_4} \arrow[uuu, "{\begin{pmatrix}11/16 & 1/4 \\ 0 & 1/16\end{pmatrix}}"]
\& \& \&
{v_3} \arrow[lll, "{\begin{pmatrix}0 & 1/16 \\ 15/16 & 0\end{pmatrix}}"]
\end{tikzcd}
$}
\]
Let $g=f|_{E(p)}\colon E(p)\to sB(2,2)$ be the
restriction of $f$ to the space of supports of $p$.
The map $g$ has six strongly connected components,
illustrated in Figure~\ref{fig:strongcomp-example}.

\begin{figure}[h!]
\centering
\begin{tikzcd}[row sep=2em]
{\color{red}\bullet} \arrow[r, "1"]
& {\tau_2} \arrow[r, "2"] \arrow[rd, "3"]
& {} \arrow[rd, "2", bend left]
& {\tau_4} \arrow[r, "1"] \arrow[rd, "5"]
& {\color{red}\bullet} \\
{\color{blue}\tau_1} \arrow[r, "6"']
& {} \arrow[r, "6"']
& {\tau_3} \arrow[ru, "4"', bend right]
 \arrow[d, shorten <=3pt, shorten >=3pt,"g"]
& {} \arrow[r, "2"']
& {\color{blue}\tau_1} \\
{\color{purple}v_1} \arrow[r, "\sigma_1"]
& {v_2} \arrow[r, "\sigma_2"]
& {v_3} \arrow[r, "\sigma_3"]
& {v_4} \arrow[r, "\sigma_4"]
& {\color{purple}v_1}
\end{tikzcd}
\caption{
The numbers $1,\ldots,6$ label the strongly connected
components, and vertices with the same non-black color are identified.}
\label{fig:strongcomp-example}
\end{figure}

Choose a direction $\chi$ on $G_{sB(2,2)}$ given by
$$
\sigma_1\rightarrow v_1\rightarrow\sigma_4,
\qquad
\sigma_2\rightarrow v_2\rightarrow\sigma_1,
$$
$$
\sigma_3\rightarrow v_3\rightarrow\sigma_2,
\qquad
\sigma_4\rightarrow v_4\rightarrow\sigma_3.
$$

The reduced facet matrix $C(g,\chi)$ has six columns
corresponding to the strongly connected components
and four rows indexed by the vertices
$\tau_1,\tau_2,\tau_3,\tau_4$, where distinct
strongly connected components meet. Explicitly,
$$
C(g,\chi)=
\begin{array}{c|rrrrrr}
 & 1 & 2 & 3 & 4 & 5 & 6\\
\hline
\tau_1 & 0 & -1 & 0 & 0 & -1 & 1\\
\tau_2 & -1 & 1 & 1 & 0 & 0 & 0\\
\tau_3 & 0 & 0 & -1 & 1 & 0 & -1\\
\tau_4 & 1 & 0 & 0 & -1 & 1 & 0
\end{array}.
$$
\end{ex}

\begin{ex}
    Below is the matrix $C(f|_{E(p)},\chi)$ for the simplicial distribution $p$ in Example \ref{ex:conncompvertex}.
    $$
    \begin{bmatrix}
        0 & -1 & 0 & 1 & 0 \\
        -1 & 1 & 0 & 0 & 1 \\
        -1 & 1 & 0 & 0 & 1 \\
        -1 & 1 & 0 & 1 & 0 \\
        -1 & 2 & 0 & 0 & 0 \\
        -1 & 2 & 0 & 0 & 0 \\
        0 & 1 & -1 & 0 & 0 \\
        -1 & 2 & 0 & 0 & 0 \\
        1 & -2 & 0 & 0 & 0 \\
        -1 & 1 & 1 & 0 & 0 \\
        1 & -1 & -1 & 0 & 0 \\
        1 & -1 & 0 & -1 & 0 \\
        -1 & 1 & 0 & 0 & 1
    \end{bmatrix}$$
\end{ex}

\begin{pro}\label{pro:rankA=rankC}
Let $f\colon E\to X$ be an $n$-facet-paired bundle scenario. Then, for any direction
$\chi$ on $X$, there is a linear isomorphism
$$
\Null(C(f,\chi))\cong\Null(A(f,\chi)).
$$
\end{pro}

\begin{proof}
Let $E^{(1)},\dots,E^{(k)}$ be the strongly connected
components of $f$, and let $w_1,\dots,w_N$ be the
nondegenerate $(n-1)$-simplices that are faces of
$n$-simplices belonging to distinct strongly connected
components.
By the definition of $C(f,\chi)$, for any $x\in\RR^k$,
we have
\begin{equation}\label{eq:ACmatrixrelation}
(C(f,\chi)x)_i
=
\sum_{j=1}^k
\sum_{e\in E^{(j)}}
A(f,\chi)_{w_i,e}x_j.
\end{equation}

Define a linear map
$$
S\colon\RR^k\longrightarrow\RR^{|E_n^\circ|}
$$
by setting $S(x)_e=x_j$ when $e\in E^{(j)}$.

We first show that $S$ maps $\Null(C(f,\chi))$
into $\Null(A(f,\chi))$.
Let $x\in\Null(C(f,\chi))$ and $w\in E_{n-1}^\circ$.

If $w=w_i$ for some $1\leq i\leq N$,  then using Equation (\ref{eq:ACmatrixrelation}), we obtain
$$
\begin{aligned}
\left( A(f,\chi) S(x) \right)_{w}&=\sum_{e\in E_n^{\circ}}A(f,\chi)_{w_i,e}S(x)_e \\ 
&= \sum_{j=1}^k \sum_{e\in E^{(j)}} A(f,\chi)_{w_i,e} x_j \\
&=\left(C(f,\chi) x\right)_i =0
\end{aligned}
$$
Otherwise, there exist $e_1,e_2 \in E_n^\circ$ such that $e_1\sim_w e_2$,
meaning that $e_1$ and $e_2$ belong to the same strongly connected component. Consequently, we obtain
$$
(A(f,\chi)S(x))_w
=\pm(S(x)_{e_1}-S(x)_{e_2})=0.
$$
Thus $S(x)\in\Null(A(f,\chi))$.
Moreover, $S$ is clearly injective.

To prove surjectivity, let $y\in\Null(A(f,\chi))$.
For $e_1\sim_w e_2$, we have
$$
0=(A(f,\chi)y)_w
=\pm(y_{e_1}-y_{e_2}),
$$
so $y_{e_1}=y_{e_2}$.
Consequently, $y$ is constant on each strongly
connected component.

We may therefore define $x\in\RR^k$ by
$$
x_j=y_e
\qquad\text{for }e\in E^{(j)}.
$$
Using Equation~(\ref{eq:ACmatrixrelation}), we obtain
$$
\begin{aligned}
(C(f,\chi)x)_i
&=\sum_{j=1}^k\sum_{e\in E^{(j)}}
A(f,\chi)_{w_i,e}x_j\\
&=\sum_{e\in E_n^\circ}
A(f,\chi)_{w_i,e}y_e\\
&=(A(f,\chi)y)_{w_i}=0.
\end{aligned}
$$
Hence $x\in\Null(C(f,\chi))$ and $S(x)=y$.
Therefore, $S$ induces the required linear isomorphism.
\end{proof}

\begin{thm}\label{thm:carrCmatrix}
Let $f\colon E\to X$ be a simplicial map, where $X$ is $n$-facet-paired, and let $p\in\sDist(f)$. Denote $g=f|_{E(p)}$. Then, for any choice of direction $\chi$ for $X$,
$$
\dim(\carr(p))=\dim \left(\Null (C(g,\chi)) \right) - 1 .
$$
\end{thm}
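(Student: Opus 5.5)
The plan is to combine Proposition~\ref{pro:carr=facetmat} with Proposition~\ref{pro:rankA=rankC}, both applied to the restriction $g=f|_{E(p)}$.

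First, by Proposition~\ref{pro:f|E(p)connected}, the map $g\colon E(p)\to X$ is an $n$-facet-paired bundle scenario. Hence both the facet matrix $A(g,\chi)$ of Definition~\ref{def:Afchi} and the reduced facet matrix $C(g,\chi)$ of Definition~\ref{def:thematrixX(p)} are well defined for any direction $\chi$ on $X$. The strongly connected components used for $C(g,\chi)$ are those of $g$, that is, of the nondegenerate $n$-simplices of $E(p)$.

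Second, Proposition~\ref{pro:carr=facetmat} gives
$$
\dim(\carr(p))=\dim\bigl(\Null(A(g,\chi))\bigr)-1.
$$
Third, Proposition~\ref{pro:rankA=rankC}, applied to the bundle scenario $g$, provides a linear isomorphism $\Null(C(g,\chi))\cong\Null(A(g,\chi))$. In particular the two null spaces have equal dimension, and substituting into the previous display gives the claimed formula.

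The only point needing care is that Proposition~\ref{pro:rankA=rankC} is stated for an arbitrary $n$-facet-paired bundle scenario. We therefore just check that $g$ qualifies: it has finite fibers because $X$ is finitely generated and $\sDist(f)$ is treated as a finite polytope, and its base is $X$. Both facts are already covered by Propositions~\ref{pro:f|Ebundle} and~\ref{pro:f|E(p)connected}. Beyond this verification there is no real obstacle: the theorem is a direct corollary of the two earlier propositions.
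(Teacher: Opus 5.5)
Your proof is correct and is exactly the paper's argument: the theorem follows by combining Proposition~\ref{pro:carr=facetmat} with the null-space isomorphism of Proposition~\ref{pro:rankA=rankC} applied to $g$. One small correction to your side remark: the fibers of $g$ are finite because each $p_\sigma$ is a finitely supported distribution, which is what Proposition~\ref{pro:f|Ebundle} records, and not because $X$ is finitely generated or because $\sDist(f)$ is assumed to be a polytope.
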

\begin{proof}
The result follows directly from
Propositions~\ref{pro:carr=facetmat}
and~\ref{pro:rankA=rankC}.
\end{proof}

Since $p$ is a vertex if and only if $\dim(\carr(p))=0$, we obtain the following corollary:
\begin{cor}
Under the hypotheses of Theorem~\ref{thm:carrCmatrix}, $p$ is a vertex if and only if
$$
\rank \left(C(g,\chi)\right)=k-1,
$$
where $k$ is the number of strongly connected components
of $f|_{E(p)}$.
\end{cor}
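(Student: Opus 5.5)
The plan is to combine Theorem~\ref{thm:carrCmatrix} with the rank--nullity theorem applied to the reduced facet matrix. By Definition~\ref{def:thematrixX(p)}, $C(g,\chi)$ has exactly $k$ columns, one for each strongly connected component of $g=f|_{E(p)}$. Rank--nullity therefore gives
$$
\dim\bigl(\Null(C(g,\chi))\bigr)=k-\rank\bigl(C(g,\chi)\bigr).
$$

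Next I would use the characterization of vertices in terms of carriers. A point $p$ is a vertex exactly when $\carr(p)=\set{p}$, that is, when $\dim(\carr(p))=0$; this is Example~\ref{ex:dimvertex=0} together with the definition of the carrier face. Theorem~\ref{thm:carrCmatrix} then turns this condition into $\dim(\Null(C(g,\chi)))=1$. Substituting into the rank--nullity identity, this is equivalent to $k-\rank(C(g,\chi))=1$, i.e.\ $\rank(C(g,\chi))=k-1$.

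The only point needing care is the converse direction of the vertex characterization: a zero-dimensional carrier face must be the singleton $\set{p}$, and then $p$ is a vertex. A face of dimension $0$ contains no two affinely independent points, so it is a single point, namely $p$. If $p=tb+(1-t)c$ with $b,c\in\sDist(f)$ and $0<t<1$, then $b\preceq p$ and $c\preceq p$, since the coordinates are nonnegative. Proposition~\ref{pro:yleqxVsuppcarr} then places $b$ and $c$ in $\carr(p)=\set{p}$, so $p$ is a vertex.

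I would also note that $k\geq 1$, because $E(p)$ is nonempty: $f|_{E(p)}$ is surjective by Proposition~\ref{pro:f|Ebundle}. This makes the statement meaningful. I expect no real obstacle here; the argument is direct bookkeeping.
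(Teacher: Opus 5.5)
Your proposal is correct and follows essentially the same route as the paper: it combines $\dim(\carr(p))=\dim(\Null(C(g,\chi)))-1$ from Theorem~\ref{thm:carrCmatrix}, the equivalence between $p$ being a vertex and $\dim(\carr(p))=0$, and rank--nullity on the $k$ columns of $C(g,\chi)$. The paper states this corollary in a single line and leaves the vertex/zero-dimensional-carrier equivalence implicit; your support argument through Proposition~\ref{pro:yleqxVsuppcarr} supplies that step.
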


\bibliography{bib.bib}
\bibliographystyle{ieeetr}

\normalfont

\end{document}